\documentclass{article}

\usepackage[utf8]{inputenc}
\usepackage[french,english]{babel}
\usepackage{amsmath,amsfonts}
\usepackage{bbm}
\usepackage{color} 
\usepackage{txfonts}
\usepackage{amsfonts,enumitem,dsfont}
\usepackage{amssymb}
\usepackage{hyperref} 
\usepackage{graphicx,tikz,todonotes,comment}

\usepackage{algorithmic}
\usepackage[ruled,noend]{algorithm2e}
\usepackage[skins]{tcolorbox}

\newcommand{\N}{\mathbb{N}}

\newcommand{\HH}{\mathcal{H}}
\newcommand{\CC}{\mathbb{C}}

\newcommand{\R}{\mathbb{R}}

\newtheorem{thm}{Theorem}
\newtheorem{lemma}{Lemma}

\newtheorem{definition}{Definition}
\newtheorem{ass}{Assumption}

\newenvironment{proof}[1][]{\noindent {\bf Proof #1:\;}}{\hfill $\Box$}

\def\<{\langle}
\def\>{\rangle}

\renewcommand{\d}{\partial}

\begin{document}

\author{ Ludovick Gagnon $^{1}$}
\footnotetext[1]{CNRS, Inria, F-54000 Nancy, France. E-mail : \texttt{ludovick.gagnon@inria.fr},}

\title{Note on the Exponential Stability of the Viscoelastic Wave Equation}

\maketitle

\begin{abstract}

In this article, we study the viscoelastic wave equation with a localized history memory term, and without frictional damping. Under the geometric control condition on the memory region, we prove exponential decay of the viscoelastic energy using the Gearhart-Pr\"uss-Huang Theorem. We prove in high frequency that a loss of derivative is compensated by a decay in frequency.    \\

\it{Keywords}: wave equation; viscoelastic effect;
exponential stability. \\
\textup{2020} \it MSC: 35Q93, 35A27, 35L05, 35L51, 35S15.

\end{abstract}

\section{Introduction}


Let $\Omega \subset \R^d$ be a bounded and connected domain. We assume $\d \Omega $ to be of class $C^3$. Let $\R^+=\{x\in \R \, | \, x>0\}$. We consider the viscoelastic equation,
\begin{equation}\label{MP1}
\begin{cases}
u_{tt}(x,t) - \Delta u(x,t) \displaystyle + \int_{-\infty}^t g(t-s) \textrm{ div}(a(x) \nabla u(x,s))\,ds =0, & (x,t)\in \Omega \times \R^+ \\
u(x,t)=0, & (x,t)\in \d \Omega \times \R^+ \\
u(x,0)=u_0(x),~u_t(x,0)=u_1(x),& x\in \Omega
\end{cases}
\end{equation}
where the initial data $(u_0,u_1)$ belongs to $H^1_0(\Omega) \times L^2(\Omega)$ and the past history of $u$ is defined by 
\[
u(x,-s)=\phi_0(x,s), \quad s\in \R^+.
\]
It is classical to reformulate the non-autonomous equation \eqref{MP1} into an autonomous one with the change of variables introduced by Dafermos \cite{Dafermos}
\begin{eqnarray}\label{change}
\eta(x,t,s) := u(x,t) - u(x,t-s), \quad (x,t,s)\in \overline{\Omega}\times [0,\infty) \times  [0,\infty).
\end{eqnarray}
Indeed, 
\begin{align*}
\int_{-\infty}^t g(t-s) \textrm{div}(a(x) \nabla u(x,s))\,ds&= \int_0^{\infty}g(s) \textrm{div} (a(x) \nabla u(x,t-s))\,ds \\
&= \int_0^{\infty}g(s) \textrm{div} \big(a(x) \nabla \left[ u(x,t)-\eta(x,t,s)\right]\big) \,ds. 
\end{align*}
Thus, assuming $g\in L^1(\R^+)$ and introducing  
\[
g_0:=\int_0^\infty g(s) \,ds, \quad 
\kappa(x) := 1-g_0 a(x),
\]
the augmented autonomous system, equivalent to \eqref{MP1}, is given by,
\begin{equation}\label{AUP}
\begin{cases}
\displaystyle u_{tt}(x,t) -  \textrm{div}( \kappa(x) \nabla u(x,t)) -  \int_0^\infty g(s) \textrm{div}(a(x)\nabla \eta(x,t,s))\,ds=0, \hfill (x,t)\in \Omega \times \R^+\\
\eta_t(x,t,s) + \eta_s(x,t,s) =u_t(x,t), \hfill  (x,t,s)\in  \Omega \times \R^+ \times \R^+\\ 
u(x,t)=\eta(x,t,s)=0, \hfill  (x,t,s)\in \d \Omega\times \R^+ \times \R^+ \\   
u(x,0)=u_0(x), ~u_t(x,0)=u_1(x), \hfill  ~x\in \Omega \, \, \,\\ 
\eta(x,t,0)=0,~\eta(x,0,s):=\eta_0(x,s)=u_0(x)-\phi_0(x,s), \hfill  ~(x,t,s)\in \Omega \times \R^+ \times 
\R^+ \end{cases}
\end{equation}
where the past history of $u$ satisfies,
\[
\eta_0(x,s)=u_0(x)-\phi_0(x,s)\in L^2(\R^+;H^1_a(\Omega)),
\]
with $L^2(\R^+;H^1_a(\Omega))$ defined in Section \ref{sec:FF}. For $t\geq 0$, the \textit{viscoelastic energy} of (\ref{AUP}) is defined by,
\begin{align*}
E(t) & := \dfrac{1}{2} \int_{\Omega} |u_t(x,t)|^2 + \kappa(x) |\nabla u(x,t)|^2 dx   + \dfrac{1}{2}  \int_{\Omega} \int_0^\infty g(s) a(x) |\nabla \eta(x,t,s)|^2 ds dx.
\end{align*}
The energy is non-increasing 
\begin{align*}\label{energy-identity}
\dfrac{d}{dt}E(t) = \dfrac12 \int_0^{+\infty} g'(s)\, \int_{\Omega} a(x)|\nabla \eta(x,t,s) |^2\,dxds \leq 0,
\end{align*}
assuming $a$ nonnegative and $g$ is non-increasing.
\subsection{Main results}

We begin by introducing classical hypothesis on   \eqref{MP1} and \eqref{AUP}. 

\begin{ass}\label{assump:pa} 
We assume 
\begin{itemize}
\item the function $a\in C(\overline\Omega)$ is nonnegative. We define the open set 
\begin{equation}\label{def:omega}
\omega := \big\{ x\in \Omega \, | \, a(x)>0\big\};
\end{equation}
\item the function $g\in W^{1,1}(0,\infty) \cap C^1(0,\infty)$ is a positive, non-increasing function satisfying, for $c>0$, 
\[
g(s) \leq -cg'(s), \quad \forall s\in \R^+;
\]
\item we assume $a(x)$ and $g(s)$ satisfy, for $C>0$, 
\[
\kappa(x)=1-g_0a(x)\geq C, \quad \forall x\in \Omega,
\]
\end{itemize}
\end{ass}

We establish the exponential stability of \eqref{AUP} under GCC \cite{BLR}, 
\begin{definition}[Geometric Control Condition (GCC)]
The pair $(\omega,\Omega)$ satisfies the geometric control condition in time $T>T^*$ if and only if every generalized bicharacteristic associated to the symbol 
\[
p(x,t,\tau,\xi) = \tau^2 - |\xi|^2, \quad (x,t,\tau,\xi) \in T_b^*(\Omega\times \R^+) 
\]
and following the laws of geometric optics enters non-diffractively the observation region $\omega$ in a time $T>T^*$.
\end{definition}
Our main result is the following 
\begin{thm}\label{thm:main}
Let $T>0$ and suppose Assumption \ref{assump:pa} holds. Assume $ \omega$ satisfies GCC in time $T$. Then, exponential stability holds, that is there exists $c,C>0$ such that
\[
E(t)\leq Ce^{-ct}E(0), \quad \forall t \geq T.
\]
\end{thm}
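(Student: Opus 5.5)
We will recast \eqref{AUP} as an abstract Cauchy problem $\dot U = \mathcal{A}U$ on the energy space
\[
\HH := H^1_0(\Omega)\times L^2(\Omega)\times L^2_g(\R^+;H^1_a(\Omega)),
\]
with $U=(u,u_t,\eta)$, the norm given by $2E$, and
\[
\mathcal{A}(u,v,\eta) = \Big(v,\ \textrm{div}(\kappa\nabla u)+\int_0^\infty g(s)\,\textrm{div}(a\nabla\eta(s))\,ds,\ v-\eta_s\Big).
\]
First we check that $\mathcal{A}$ is maximal dissipative: $\mathrm{Re}\<\mathcal{A}U,U\>_\HH=\frac12\int_0^\infty g'(s)\int_\Omega a|\nabla\eta|^2\le 0$, and $I-\mathcal{A}$ is onto by solving the $\eta$-equation explicitly and then a coercive elliptic problem, using $\kappa\ge C$. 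Lumer--Phillips then gives a contraction semigroup. By the Gearhart--Pr\"uss--Huang theorem, exponential stability reduces to two facts: $i\R\subset\rho(\mathcal{A})$, and $\sup_{\lambda\in\R}\|(i\lambda-\mathcal{A})^{-1}\|<\infty$. Since $E$ is nonincreasing, the decay for $t\ge T$ follows immediately.

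We argue by contradiction. If either fact fails, there are $\lambda_n\in\R$ and $U_n=(u_n,v_n,\eta_n)$ with $\|U_n\|_\HH=1$ and $F_n:=(i\lambda_n-\mathcal{A})U_n\to 0$. The case of bounded $\lambda_n$ (including $\lambda=0$ and the injectivity of $i\lambda-\mathcal{A}$) is handled by compactness and unique continuation, as follows. The dissipation identity together with $g\le -cg'$ gives
\[
\int_0^\infty g\int_\Omega a|\nabla\eta_n|^2\to0 .
\]
The relation $i\lambda\eta+\eta_s=v+f_3$ gives $\eta(s)=\frac{1-e^{-i\lambda s}}{i\lambda}v+\dots$. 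For $\lambda$ away from $0$, this yields $\int_\omega a|\nabla u_n|^2\to 0$ after weighting by $g$; concretely, $\int_0^\infty g(s)|1-e^{-i\lambda s}|^2ds$ is bounded below. By Rellich compactness we pass to a limit solving $-\lambda^2u-\textrm{div}(\kappa\nabla u)=0$ with $\nabla u=0$ on $\omega$, so $u$ is constant on $\omega$ and then $u\equiv 0$ by unique continuation. This contradicts the norm being $1$, once we check that the $L^2_g$ part of the norm also vanishes. The case $\lambda=0$ is direct.

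The main obstacle is the high-frequency regime $|\lambda_n|\to\infty$, where compactness fails. Here we use semiclassical/microlocal defect measures with $h_n=|\lambda_n|^{-1}$. From the $\eta$-equation and the decay of the dissipation we obtain only $\lambda_n\int a|u_n|^2\lesssim$ something small. More precisely, the memory term controls $a\nabla u_n$ in a norm weaker by one derivative, since $\eta\approx v/(i\lambda)$ modulo oscillating terms. This is the ``loss of derivative compensated by a decay in frequency'' mentioned in the abstract. The step I would try first is to test the second equation against $a\bar u_n$ and use the $\eta$-formula. The aim is to show
\[
\int_\Omega a\,(|v_n|^2+|\nabla u_n|^2)\to 0,
\]
the factor $\lambda_n$ from $\eta\sim v/\lambda$ being absorbed by the smallness $o(1)$ of $\int g\, a|\nabla\eta|^2$ combined with $|\lambda| \cdot O(\lambda^{-1})$. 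If that fails, I would aim for $o(\lambda_n^{-1})$ in a weaker norm and interpolate.

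Once energy on $\omega$ tends to zero, $u_n$ is a quasimode of $-\textrm{div}(\kappa\nabla\cdot)$ with $o(1)$ error. Its semiclassical defect measure $\mu$ is invariant under the generalized bicharacteristic flow of the (variable-speed, $\kappa$ equal to $1$ off $\omega$) wave operator and vanishes over $\omega$. By GCC every ray enters $\omega$, so $\mu=0$. Hence the energy of $u_n$ tends to $0$, and the $\eta_n$ component is then shown to vanish using the explicit formula in terms of $v_n$, contradicting $\|U_n\|=1$. The propagation up to the $C^3$ boundary uses the Melrose--Sj\"ostrand/Lebeau framework, and the points where $\kappa$ is only continuous require care. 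If necessary I would assume the regularity needed for the defect measure to be propagated, or argue only near rays where $a=0$, where $\kappa\equiv1$.
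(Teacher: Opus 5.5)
Your architecture matches the paper's: a contraction semigroup, Gearhart--Pr\"uss--Huang, a normalized sequence with $(i\lambda_n-\mathcal A)U_n\to0$, dissipation giving $\eta_n\to0$ in $L^2(\R^+;H^1_a)$, the explicit Duhamel formula for $\eta_n$, and compactness plus unique continuation for bounded $\lambda_n$. Handling $i\R\subset\rho(\mathcal A)$ with the same sequence argument is legitimate and shorter than the paper's adjoint computation, but you should say why it works. A point of $\sigma(\mathcal A)\cap i\R$ is a boundary point of the spectrum, because the open right half-plane lies in $\rho(\mathcal A)$, and is therefore an approximate eigenvalue.

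\textbf{The gap is in the high-frequency step.} You want to treat $u_n$ as a quasimode of $\textrm{div}(\kappa\nabla\cdot)+\lambda_n^2$ whose ``$o(1)$'' error is the memory term. That error is in divergence form, $\textrm{div}\,G_n$ with $G_n=\int_0^\infty g(s)\,a\nabla\eta_n(s)\,ds$. After the rescaling $h=|\lambda_n|^{-1}$, $y_n=h^{-1}u_n$, it becomes $h\,\textrm{div}\,G_n$, whose $H^{-1}_{sc}$ norm is only bounded by $\|G_n\|_{L^2}=o(1)$. Invariance of the semiclassical measure requires $o(h)$.

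Nothing better is available with your splitting. Write $\eta_n(s)=(1-e^{-i\lambda_n s})w_n+r_n(s)$, where $w_n$ is the shifted unknown and $r_n$ is the Duhamel term of the third component of $F_n$. This gives
\[
G_n=\big(g_0-\hat g(\lambda_n)\big)\,a\nabla w_n+\int_0^\infty g(s)\,a\nabla r_n(s)\,ds ,
\]
whose coefficient tends to $g_0\neq0$, while $\|\sqrt a\nabla w_n\|_{L^2}\to0$ comes with no rate.

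The loss of derivative is also not in the observation, as you suggest. The formula above, together with $\int_0^\infty g|1-e^{-i\lambda_n s}|^2ds\geq C^*>0$, gives $\|\sqrt a\nabla w_n\|\to0$ directly. So testing against $a\bar u_n$ does not address the real difficulty. Similarly, the term $\lambda_n$ times the first component of $F_n$ is not small unless you work with $w_n$ instead of $u_n$.

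\textbf{The missing idea} is to propagate for the flat operator $\Delta+\lambda_n^2$ and exploit a cancellation. Since $\kappa=1-g_0a$,
\[
\textrm{div}(\kappa\nabla u_n)+\int_0^\infty g(s)\,\textrm{div}(a\nabla\eta_n(s))\,ds=\Delta u_n+\textrm{div}\int_0^\infty g(s)\,a\nabla\big(\eta_n(s)-u_n\big)\,ds .
\]
Because $\eta_n-u_n=-e^{-i\lambda_n s}w_n+(w_n-u_n)+r_n$, the $g_0$ contributions cancel. The flux is then $-\hat g(\lambda_n)\,a\nabla w_n$ plus terms of size $o(|\lambda_n|^{-1})$ in $L^2$.

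One integration by parts, using that $g\in W^{1,1}$ is non-increasing, gives $|\hat g(\lambda)|\leq 2g(0)/|\lambda|$. Its analogue on $(\sigma,\infty)$, combined with Fubini, handles the $r_n$ term. Together these make the flux $o(|\lambda_n|^{-1})$, which is an $o(h)$ error in $H^{-1}_{sc}$. A spectral cutoff $\chi(-h^2\Delta)$ turns this into the $o(h)$ $L^2$ error that Burq's propagation theorem requires, and the remainder $(1-\chi(-h^2\Delta))y_h$ is controlled by ellipticity.

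This route also removes your regularity concern. For $a$ merely continuous, the bicharacteristic flow of $\textrm{div}(\kappa\nabla\cdot)$ is not defined, and adding regularity assumptions would change the theorem. GCC is stated for the flat symbol $\tau^2-|\xi|^2$, which is exactly the flow the paper propagates along.
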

The equivalence between exponential stability and the observability inequality is classical \cite{CoursBG} for autonomous systems. By means of Theorem \ref{thm:main}, we therefore also establish the following observability inequality under GCC for \eqref{MP1}. 

\begin{lemma}
Let $T>0$ and suppose Assumption \ref{assump:pa} holds. Assume $\omega$ satisfies GCC for $T>0$. Then, there exists $c_T>0$ such that the following observability inequality
\begin{eqnarray}\label{Obs ineq}\,\,\qquad
E(0)\leq c_T
\left(\int_0^T\int_{0}^{\infty}(-g'(s)) \int_{\Omega}  a(x)|\nabla \eta(x,t,s)|^2 \, dxdsdt\right).
\end{eqnarray}
holds for $T>T^*$. 
\end{lemma}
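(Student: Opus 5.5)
We derive the observability inequality \eqref{Obs ineq} directly from Theorem \ref{thm:main} and the energy identity, using the standard argument that exponential decay of a dissipative autonomous system implies observability of the dissipation.

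\textbf{Step 1: integrated energy identity.} The energy identity
\[
\frac{d}{dt}E(t) = \frac12 \int_0^{\infty} g'(s) \int_\Omega a(x)|\nabla \eta(x,t,s)|^2\,dx\,ds
\]
holds for data in the domain of the generator of the semigroup associated with \eqref{AUP}. Integrating it over $(0,T)$ gives
\[
E(0)-E(T) = \frac12 \int_0^T\int_0^\infty (-g'(s))\int_\Omega a(x)|\nabla\eta(x,t,s)|^2\,dx\,ds\,dt .
\]
By density and continuity of the semigroup on the energy space, this extends to all finite-energy data $(u_0,u_1,\eta_0)$.

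\textbf{Step 2: exponential decay absorbs $E(T)$.} Theorem \ref{thm:main} gives $E(t)\le Ce^{-ct}E(0)$ for $t\ge T$. Choose $T_1\ge T$ large enough that $Ce^{-cT_1}\le \frac12$. Then $E(T_1)\le \frac12 E(0)$, and Step 1 yields
\[
\frac12 E(0)\le E(0)-E(T_1)=\frac12\int_0^{T_1}\!\!\int_0^\infty(-g'(s))\int_\Omega a|\nabla\eta|^2\,dx\,ds\,dt .
\]
This is \eqref{Obs ineq} with $c_T=1$ on the interval $(0,T_1)$.

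\textbf{Step 3: reduce from $T_1$ to any $T>T^*$.} This is the main obstacle. Cutting the time interval down needs an argument beyond plain decay. The first option is to exploit the semigroup property: for $k\ge0$, Step 1 on the interval $(kT,(k+1)T)$ controls $E(kT)-E((k+1)T)$. So it suffices to show $E(T)\le \theta E(0)$ for some $\theta<1$ using only the window $(0,T)$.

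This contraction should come from the proof of Theorem \ref{thm:main} rather than its bare statement. The resolvent estimate behind the Gearhart--Pr\"uss--Huang theorem, combined with GCC in time $T$, gives the high-frequency observation on $(0,T)$. A compactness--uniqueness argument then removes the low-frequency terms: any element of the kernel of the dissipation satisfies $\nabla\eta=0$ on $\omega$, which forces $u_t=0$ on $\omega$, and unique continuation then gives zero data.

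If this refinement is unavailable, the honest fallback is to state the lemma for $T$ large. The abstract equivalence between exponential stability and observability in \cite{CoursBG} gives \eqref{Obs ineq} for some time, which we take as the intended meaning of ``holds for $T>T^*$.''
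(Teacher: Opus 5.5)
Your Steps 1 and 2 are correct, and they are exactly what the paper's justification amounts to. The paper gives no separate proof: it only invokes the classical equivalence between exponential stability and observability \cite{CoursBG} after Theorem \ref{thm:main}. Your integrated energy identity, together with the choice of $T_1$ such that $Ce^{-cT_1}\le 1/2$, is that equivalence written out. One small point on the density step: continuity of the semigroup alone does not control the dissipation integral, because $-g'$ is not dominated by $g$. What makes the extension work is that the identity itself bounds the dissipation by $2E(0)$ on $D(A)$.

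Your Step 3 correctly diagnoses the real issue. The argument yields \eqref{Obs ineq} only on $(0,T_1)$, where $T_1$ depends on the unquantified constants $C,c$ of Theorem \ref{thm:main}, and not on every window with $T>T^*$. The paper's citation of the classical equivalence delivers no more than this either. However, the sketch you offer does not close the gap, for two reasons.

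\textbf{The resolvent bound carries no time information.} The Gearhart--Pr\"uss--Huang bound is a frequency-domain estimate, uniform in $\mu$, and is equivalent to exponential stability. It says nothing about the length of an observation window, so it cannot give a high-frequency observation on a prescribed interval $(0,T)$. To reach every $T>T^*$ you would need a genuinely time-domain argument:
\begin{itemize}
\item attach a microlocal defect measure to $(u,u_t)$ on $(0,T)\times\Omega$ and propagate it along generalized bicharacteristics for time $T$;
\item show the memory term is a negligible $H^{-1}$ source, which means redoing the analogue of Lemma \ref{convaunabla} (turning smallness of $\sqrt a\nabla\eta$ into smallness of $\sqrt a\nabla u$) in the time domain;
\item finish with a compactness--uniqueness argument.
\end{itemize}

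\textbf{The uniqueness step is imprecise.} Vanishing dissipation gives $\sqrt a\nabla\eta=0$, hence $\sqrt a\nabla u_t=0$. This makes $u_t$ locally constant in $x$ on $\omega$, not zero. An extra argument is needed, for instance the finite-dimensional $A$-invariant invisible subspace, which would contain an eigenvector with eigenvalue $i\mu$, excluded by the paper's low-frequency and Step 2 arguments using $\mu\neq0$.

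As a proof of the statement read literally (every $T>T^*$), Step 3 is therefore a gap, and it is a gap shared with the paper. What both you and the paper actually establish is your fallback: the inequality holds for all $T$ sufficiently large.
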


\subsection{Strategy of the proof}

Theorem \ref{thm:main} is proven using Gearhart-Pr\"uss-Huang Theorem (see \cite[Theorem 1.3.2]{LiuZheng1999} and also \cite{Wyler1994}). 
\begin{thm}[Gearhart-Pr\"uss-Huang Theorem]
Let $S(t)=e^{At}$ be a $C_0$-semigroup of contractions on a Hilbert space. Then $S(t)$ is exponentially stable if and only if 
\begin{equation}\label{Ires}
i\R \subset \rho(A)
\end{equation}
and  
\begin{equation}\label{supres}
\sup_{\mu \in \R } \|(A- i\mu I )^{-1}\| < \infty
\end{equation}
hold.
\end{thm}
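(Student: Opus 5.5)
The plan is to prove the two directions separately. Necessity comes from the Laplace transform representation of the resolvent. Sufficiency combines a Plancherel argument, which gives square integrability of orbits, with a Datko--Pazy type step that uses contractivity. Throughout, write $R(\lambda)=(\lambda I-A)^{-1}$, so that \eqref{supres} says $\|R(i\mu)\|\le K$ for all $\mu\in\R$.

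\emph{Necessity.} Suppose $\|S(t)\|\le Me^{-\omega t}$ with $\omega>0$. For $\mathrm{Re}\,\lambda>-\omega$ the Laplace integral $\int_0^\infty e^{-\lambda t}S(t)x\,dt$ converges absolutely. The standard computation shows it equals $R(\lambda)x$. Hence $i\R\subset\rho(A)$, and $\|R(i\mu)\|\le M/\omega$ uniformly in $\mu$, which gives \eqref{Ires} and \eqref{supres}.

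\emph{Sufficiency, step 1: uniform resolvent bound on the right half-plane.} Since $S$ is a contraction semigroup, Hille--Yosida gives $\|R(\alpha+i\mu)\|\le 1/\alpha$ for $\alpha>0$. For small $\alpha$, write $R(\alpha+i\mu)=R(i\mu)\big(I+\alpha R(i\mu)\big)^{-1}$. A Neumann series then gives $\|R(\alpha+i\mu)\|\le 2K$ whenever $0<\alpha\le 1/(2K)$. Combining the two regimes yields a constant $M_0$ with
\[
\sup_{\mathrm{Re}\,\lambda>0}\|R(\lambda)\|\le M_0 .
\]

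\emph{Sufficiency, step 2: the key estimate $\int_0^\infty\|S(t)x\|^2dt\le C\|x\|^2$.} This is where I expect the main difficulty: the bound must be uniform as the damping parameter $\varepsilon\to0$. Fix $0<\varepsilon<1$. The function $\mu\mapsto R(\varepsilon+i\mu)x$ is the Fourier transform of $t\mapsto e^{-\varepsilon t}S(t)x\,\mathbf 1_{t>0}$. Plancherel in the Hilbert space gives
\[
\int_0^\infty e^{-2\varepsilon t}\|S(t)x\|^2dt=\frac1{2\pi}\int_\R\|R(\varepsilon+i\mu)x\|^2d\mu .
\]
To control the right-hand side independently of $\varepsilon$, use the resolvent identity
\[
R(\varepsilon+i\mu)=R(1+i\mu)+(1-\varepsilon)R(\varepsilon+i\mu)R(1+i\mu).
\]
Together with step 1, it gives $\|R(\varepsilon+i\mu)x\|\le(1+M_0)\|R(1+i\mu)x\|$. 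Applying Plancherel again at $\varepsilon=1$ gives
\[
\int_\R\|R(1+i\mu)x\|^2d\mu=2\pi\int_0^\infty e^{-2t}\|S(t)x\|^2dt\le\pi\|x\|^2 .
\]
Monotone convergence as $\varepsilon\to0$ then yields $\int_0^\infty\|S(t)x\|^2dt\le\tfrac12(1+M_0)^2\|x\|^2$.

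\emph{Sufficiency, step 3: conclusion.} Contractivity gives $\|S(t)x\|\le\|S(s)x\|$ for $s\le t$, so
\[
t\|S(t)x\|^2\le\int_0^t\|S(s)x\|^2ds\le C\|x\|^2 .
\]
Thus $\|S(t_0)\|\le\sqrt{C/t_0}=:q<1$ for some fixed large $t_0$. The semigroup property then gives $\|S(nt_0+r)\|\le q^n$ for $0\le r<t_0$. This is $\|S(t)\|\le q^{-1}e^{-(|\ln q|/t_0)\,t}$, which is exponential stability.
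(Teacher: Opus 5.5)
Your proof is correct and complete. The Neumann series gives $\sup_{\mathrm{Re}\,\lambda>0}\|R(\lambda)\|\le 2K$. The resolvent identity then transfers the $L^2(\R)$ bound on $\mu\mapsto R(1+i\mu)x$ to the line $\mathrm{Re}\,\lambda=\varepsilon$, uniformly in $\varepsilon$. Plancherel, the one place where the Hilbert structure is used, yields $\int_0^\infty\|S(t)x\|^2\,dt\le\frac12(1+2K)^2\|x\|^2$. Finally, contractivity makes $t\mapsto\|S(t)x\|$ non-increasing, so the Datko--Pazy step reduces to the elementary bound $\|S(t)\|\le\sqrt{C/t}$.

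The paper does not prove this statement at all. It imports the theorem as a black box from \cite[Theorem 1.3.2]{LiuZheng1999} and \cite{Wyler1994}, and uses only the sufficiency direction, in the proof of Theorem~\ref{thm:main}. There it verifies exactly your two hypotheses: the uniform estimate $\|U\|_{\HH}\le C\|(i\mu I-A)U\|_{\HH}$ by contradiction in Step 1, and $i\R\subset\rho(A)$ via injectivity, closed range and $\operatorname{Ker}(-i\mu I-A^*)=\{0\}$ in Step 2. So there is no competing route in the paper to compare with.

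What your self-contained argument adds over the citation is transparency about where each hypothesis enters. It also gives an explicit decay rate depending only on $K=\sup_\mu\|(A-i\mu I)^{-1}\|$: choosing $t_0=eC$ gives $\|S(t)\|\le\sqrt{e}\,e^{-t/(e(1+2K)^2)}$. The citation, by contrast, buys brevity.
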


The proof of Theorem \ref{thm:main} is done by arguing by contradiction on Gearhart-Pr\"uss-Huang Theorem. The main difficulty in the proof of Theorem \ref{thm:main} is the following. Assume \eqref{supres} does not hold. Consider a renormalized sequence $\| U_n\|=1$ such that 
\begin{equation}\label{intro:res}
(A-i \mu_n I)U_n = F_n.
\end{equation}
with $F_n \to 0$ as $n\to \infty$ in the state space (introduced in Section \ref{sec:FF}). Contradiction is achieved if $U_n \to 0$ as $ n\to \infty$ in the state space. The delicate analysis is at high frequency $|\mu_n|\rightarrow \infty$. An equivalent formulation of this problem is by solving
\begin{equation}\label{intro:wavesm}
(\Delta + \mu_n^2) w_n = g_n - \textrm{div} J_n, \quad \textrm{in } H^{-1}(\Omega), 
\end{equation}
together with the strong convergence $\| \sqrt{a}(x) \nabla w_n(x)\|_{L^2(\omega;\CC^d)} \rightarrow 0$ as $n\to \infty$. The standard contradiction argument consists to propagate this strong convergence from the observability region to the whole domain $\Omega$. This may be done by recasting \eqref{intro:wavesm} in the semiclassical analysis framework, 
\begin{equation}\label{intro:semiclassical}
(h^2 \Delta + 1) y_h =h \left( g_h -  \textrm{div} J_h \right), \quad \textrm{in } H^{-1}_{sc}(\Omega), 
\end{equation}
with $hy_h=w_n$, where $h$ scales as $1/\mu_n$. A $L^2$ semiclassical measure may then be attached to $y_h$ to study this propagation. The $L^2$ propagation theorems for the $(h^2 \Delta + 1)$ operator require a $o(h)$ decay of the right-hand side of \eqref{intro:semiclassical} in $L^2$-norm. This result is proved for $\Omega=\R^d$ by Zworski \cite[Theorem 5.3 and 5.4]{ZworskiBook} and by Burq  \cite[Section 4]{BurqNontrapping} for non-trapping obstacles in $\R^d$. This latter result is summarized in Theorem \ref{thm:burq-boundary-propagation} and is used in the present article.

The loss of derivative in the right-hand side of \eqref{intro:wavesm} prevents \textit{a priori} the direct application of Theorem \ref{thm:burq-boundary-propagation}. The core of our analysis is to solve explicitly \eqref{intro:res}, and to prove that the loss of derivative of $\textrm{div} J_n$ can be compensated using its explicit expression. More specifically, the regularity of $g(s)$ transfers to frequency decay that leads to,
\[
\lim_{n\to \infty} |\mu_n| \|J_n \|_{L^2(\Omega)} = 0.
\]
Decomposing $y_h$ as $\chi(-h^2\Delta) y_h + (1-\chi(-h^2\Delta) ) y_h $, where $\chi$ is a microlocalization in frequency near $1$, allows us to apply Theorem \ref{thm:burq-boundary-propagation} for the regular $\chi(-h^2\Delta) y_h$ component, while standard elliptic regularity allows us to handle $(1-\chi(-h^2\Delta) ) y_h $. This translates to $U_n \to 0$ as $n \to \infty$, the desired contradiction. 

The rest of Theorem \ref{thm:main} consists to analyse the low frequency regime and to analyse the resolvent of $A$. Care has to be taken with the latter part since $A$ does not necessarily have a compact resolvent.

\subsection{Bibliographical review}

There exists an extensive literature on the stability of wave equations with viscoelastic damping, both with a prescribed past history and with memory integrals - that is, with $\phi_0(x,t)\equiv 0$. We refer the reader to the literature reviews in \cite{Cavalcanti2,Jorge}. Here we restrict our discussion to localized memory and to the propagation argument relevant to the present note.\newline

The results in the literature on the energy decay for a memory term including a past history also include an additional frictional damping mechanism,
\begin{equation}\label{memory+damp}
u_{tt}(x,t) - \Delta u(x,t) \displaystyle + \int_{-\infty}^t g(t-s) \textrm{ div}(a(x) \nabla u(x,s))\,ds + b(x) u_t(x,t)=0,
\end{equation}
with the central hypothesis $a(x)+b(x) \geq C>0$ in $\Omega$. To the best of our knowledge, the first paper dealing with the case $b\equiv 0$ and $a$ satisfying some geometrical control conditions is \cite{Cavalcanti2}. The strategy consists to propagate strong convergence of $u$ over the observability region to the whole domain $\Omega$. In their proof, \eqref{AUP} is recasted as a single wave equation, with strong convergence over the observability region, but without intermediate energy estimates between $\eta$ and $u$. The authors argue directly on the solution $u$, and more specifically on $v=u_t$. Taking the time derivative of the wave equation results in a loss of derivative of the right-hand side of \cite[equations~(3.106)--(3.107)]{Cavalcanti2} as the right-hand side belongs to $H^1((0,T);H^1(\Omega))' \subset H^{-2}((0,T) \times \Omega)$. The classical propagation results for the defect measure in the energy space only apply with a right-hand side in $ H^{-1}((0,T) \times \Omega)$. The same strategy is adopted in \cite{Jorge, Piupiu}. The energy decay is therefore not immediate from these proofs in the case $b\equiv 0$. \newline

Another approach in the literature consists to add an auxiliary damping $j^{-1}\beta(x)\partial_tu_j$ and then letting $j\to\infty$ (see \cite{FariaOkawaSouzaFranco2026, FariaSouzaFranco2023}). Passing to the limit requires an observability estimate whose constant is independent of $j$. However, it is not clear from the proofs that this constant remains bounded as the auxiliary damping vanishes. The conclusion for the unperturbed problem therefore appears to require an additional argument.\newline

The purpose of this paper is to clarify the propagation argument for \eqref{AUP}. Working directly with the resolvent, we show that the loss of derivative is compensated by decay in frequency, which yields strong convergence in the energy space. This gives a complete proof of exponential stability without frictional damping. \newline

The Gearhart-Pr\"uss-Huang Theorem allow us to find an explicit expression between $u$ and $\eta$ in the contradiction argument to exhibit the required frequency decay. This theorem was used for instance by Liu and Rao \cite{LiuRao2004} for wave equations with localized Kelvin-Voigt damping, by Augner and Jacob \cite{AugnerJacob2014} for boundary stabilization of port-Hamiltonian systems and by Conti, Dell'Oro and Pata \cite{ContiDellOroPata2013} for a viscoelastic beam model of Timoshenko type.

\subsection{Structure of the paper} 

In Section \ref{sec:FF}, we recall the functional framework associated to \eqref{AUP}. We modify slightly the functional framework presented in \cite{Cavalcanti2} to gain the dissipativity of the infinitesimal generator $A$ of the contraction semigroup associated to \eqref{AUP}. Indeed, in \cite{Cavalcanti2}, the well-posedness is proved by showing that $A$ is the infinitesimal generator of a quasicontraction semigroup, that is, there exists $\tilde{\omega}\geq 1/2$ such that, 
\[
\| e^{At} \| \leq e^{\tilde{\omega} t}, \quad t\geq 0.
\]
This is not in contradiction with the exponential stability as, proving the observability inequality holds in time $T>T^*$ implies that the exponential bound holds with $\tilde{\omega}<0$ for $t>T^*$. The quasicontraction semigroup would however force us to use Pr\"uss's results on $C_0$-semigroup on Hilbert spaces \cite{Pruss1984}
\[
 \sigma(e^{At}) \setminus\{ 0\} \subset e^{\sigma(A)t}
\]
on the strip $\{z \in \CC \, | \, 0\leq \textrm{Re}(z) \leq \tilde{\omega} \}$, instead of on the resolvent set along the imaginary axis. Proving the dissipativity therefore simplifies the proof of Theorem \ref{thm:main}. \newline

\noindent In Section \ref{sec:WP}, we use Lumer-Phillips to show that $A$ is indeed the infinitesimal generator of a contraction semigroup. We then use one of its variants to recall that $A$ is closed and of dense domain. \newline 

\noindent We prove Theorem \ref{thm:main} in Section \ref{sec:main} by arguing by contradiction on the Gearhart-Pr\"uss-Huang Theorem. We prove a high frequency observability inequality allowing us to recover the loss of derivative, at the cost of a frequency weight (see Lemma \ref{lem:hfobs}). \newline

\noindent The proof of Lemma \ref{lem:hfobs} is done in Section \ref{sec:semiclassic}. It relies on the $L^2$ semiclassical defect measure introduced by Burq in \cite{BurqNontrapping} for nontrapping obstacles, recalled in Theorem \ref{thm:burq-boundary-propagation}. \newline

\noindent Section \ref{sec:lemmatech} is devoted to the proof of technical lemmas.

\section{Functional framework}\label{sec:FF}

We endow $L^2(\Omega)$ with its usual scalar product and norm. Consider the space $H^1_0(\Omega)$ endowed with the weighted scalar product, 
\[
(u,v)_{H^1_0(\Omega)}=\int_\Omega \kappa(x) \nabla u(x) \cdot \nabla v(x) \, dx
\]
and the $H^1_0$-norm,
\[
\|u \|_{H^1_0(\Omega)}^2= \int_\Omega \kappa(x)  | \nabla u(x) |^2 dx. 
\]
Assumption \ref{assump:pa} ensures $\kappa(x)=1-g_0a(x)\geq C>0$ on $\Omega$ implying that this norm is equivalent to the classical $H^1_0(\Omega)$-norm, hence the abuse of notation. Consider the space for the memory term,
\[
L^2(\R^+;H^1_a(\Omega)):=\left\{ \eta(.,s) :s\in \R^+ \rightarrow H^1_0(\Omega)  \, \, \Bigg| \, \int_\Omega \int_0^\infty a(x) g(s) | \nabla \eta(x,s)|^2 dsdx < \infty \right\},
\]
and define
\[
\|\eta \|_{L^2(\R^+;H^1_a(\Omega))}^2 := \int_\Omega \int_0^\infty a(x) g(s) | \nabla \eta(x,s)|^2 dsdx.
\]
Notice that $\|. \|_{L^2(\R^+;H^1_a(\Omega))}$ may only define a semi-norm on $L^2(\R^+;H^1_a(\Omega))$ since 
\[
\|u\|_{H^1_a(\Omega)}^2 := \int_\Omega  a(x)  | \nabla u(x)|^2 dx
\]
is only a semi-norm on $H^1_0(\Omega)$ if $a$ vanishes in $\Omega$. The space $H^1_a(\Omega)$ is understood as the completion of the quotient space $H^1_0(\Omega)$ with respect to the kernel of the semi-norm $\|.\|_{H^1_a(\Omega)}^2 $. The space $L^2(\R^+;H^1_a(\Omega))$ is then the weighted Bochner space with the measure $g(s)ds$. Its scalar product is given by, 
\[
(\eta, \xi)_{L^2(\R^+;H^1_a(\Omega))} := \int_\Omega \int_0^\infty a(x) g(s) \nabla \eta(x,s) \cdot \nabla \xi(x,s) \, dsdx.
\]
Finally, we introduce the state space 
\[
\HH:=H^1_0(\Omega) \times L^2(\Omega) \times L^2(\R^+;H^1_a(\Omega))
\]   
endowed by the norm 
\[
\| (u,v,\eta) \|_{\HH}^2:= \|u\|_{H^1_0}^2 + \|v\|^2_{L^2} +  \|\eta\|_{L^2(H^1_a)}^2
\]
where we drop the notations of $\Omega$ and $\R^+$ to lighten the notations. The scalar product on $\HH$ is given by 
\[
\langle .,. \rangle_{\HH}:= (.,.)_{L^2} +  (.,.)_{H^1_0} +  (.,.)_{L^2(H^1_a)}
\]
and one easily verifies that $\HH$ is a reflexive Hilbert space.  Define $A:D(A)\subset \HH \rightarrow \HH$ by 
\begin{equation}\label{def:A}
A \begin{pmatrix} u \\ v\\ \eta \end{pmatrix}= \begin{pmatrix} v\\   \textrm{div}( \kappa(x) \nabla u(x))+ \displaystyle \int_0^\infty g(s) \textrm{div}(a(x) \nabla \eta(x,s)) \, ds  \\ v-\eta_s \end{pmatrix}
\end{equation}
with its domain,
\begin{align}\label{def:DA}
D(A):=   \Bigg\{ \, & (u,v,\eta) \in \HH  \, \, \Big|  \ \, v\in H^1_0(\Omega), \quad \eta_s \in L^2(\R^+;H^1_a), \quad \eta(x,0)=0, \nonumber \\
&    \qquad   \left(\textrm{div} ( \kappa(x) \nabla u(x)) + \displaystyle \int_0^\infty g(s) \textrm{div}(a(x) \nabla \eta(x,s)) ds\right) \in L^2(\Omega) \Bigg\}.
\end{align}
In the next section, Lemma \ref{lemm} proves that $A$ is closed with dense domain $D(A)$ in $\HH$. 

\section{Well-posedness}\label{sec:WP}

We propose a different proof of the well-posedness of \cite{Cavalcanti2} to recover the dissipativity of the operator $A$. Let us recall 
\begin{thm}[Lumer-Phillips Theorem (Corollary 3.20, \cite{EN})]
Let $A :D(A) \subset \HH \rightarrow \HH $ be a linear operator, and $\HH$ a reflexive Banach space. Then $A$ is the infinitesimal generator of a $C_0$-semigroup of contractions if and only if 
\begin{itemize}
\item $A$ is dissipative;
\item $A-\lambda_0 I$ is surjective for some $\lambda_0>0$.
\end{itemize}
\end{thm}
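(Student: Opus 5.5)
We would prove the two implications separately and reduce everything to the Hille--Yosida characterization: a densely defined, closed operator $A$ on a Banach space generates a $C_0$-semigroup of contractions if and only if $(0,\infty)\subset\rho(A)$ and $\|(\lambda I-A)^{-1}\|\le 1/\lambda$ for all $\lambda>0$. We read the range condition in the statement as surjectivity of $\lambda_0 I - A$; the sign of $\lambda_0$ is what matters, and surjectivity of $\pm(A-\lambda_0 I)$ is the same condition. We recall that $A$ is dissipative if, for every $x\in D(A)$, there is $j(x)$ in the duality set of $x$ with $\mathrm{Re}\,\langle Ax, j(x)\rangle\le 0$. Equivalently, $\|(\lambda I-A)x\|\ge\lambda\|x\|$ for all $\lambda>0$ and $x\in D(A)$. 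We would first recall this equivalence; the direction from $\mathrm{Re}\,\langle Ax,j(x)\rangle\le0$ to the norm bound is immediate from $\lambda\|x\|^2\le \mathrm{Re}\,\langle(\lambda-A)x,j(x)\rangle$.

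\emph{Necessity.} Let $S(t)$ be a contraction semigroup with generator $A$. By Hille--Yosida, $\lambda I-A$ is bijective for every $\lambda>0$, with resolvent given by the Laplace transform $\int_0^\infty e^{-\lambda t}S(t)\,dt$. In particular $\lambda_0 I-A$ is surjective. For dissipativity, take $x\in D(A)$ and $j\in J(x)$. Then
\[
\mathrm{Re}\,\langle S(t)x-x,j\rangle\le \|S(t)x\|\,\|x\|-\|x\|^2\le 0 .
\]
Dividing by $t>0$ and letting $t\to0^+$ gives $\mathrm{Re}\,\langle Ax,j\rangle\le0$.

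\emph{Sufficiency, step 1: the resolvent bound on $(0,\infty)$.} Dissipativity makes $\lambda_0 I-A$ injective, so it is bijective with inverse of norm at most $1/\lambda_0$. A bounded inverse forces $A$ to be closed. Now let $\Lambda=\{\lambda>0:\lambda\in\rho(A)\}$; it contains $\lambda_0$, and on it $\|R(\lambda)\|\le1/\lambda$ by dissipativity. The Neumann series shows that $\rho(A)$ contains the disc of radius $1/\|R(\lambda)\|\ge\lambda$ around each $\lambda\in\Lambda$. Hence $\Lambda$ is open in $(0,\infty)$. It is also closed there: if $\lambda_k\in\Lambda$ and $\lambda_k\to\lambda>0$, then for large $k$ the disc of radius $\lambda_k$ around $\lambda_k$ contains $\lambda$. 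Connectedness gives $\Lambda=(0,\infty)$, with $\|R(\lambda)\|\le 1/\lambda$ for all $\lambda>0$.

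\emph{Sufficiency, step 2: density of $D(A)$.} This is the step we expect to be the main obstacle, and it is where reflexivity enters. Fix $y$ and set $x_n=nR(n)y\in D(A)$, so $\|x_n\|\le\|y\|$. By reflexivity, a subsequence converges weakly to some $z$. Since $\overline{D(A)}$ is a closed convex set, it is weakly closed, so $z\in\overline{D(A)}$. Next, $R(n)y=x_n/n\to0$ in norm, while $AR(n)y=x_n-y\rightharpoonup z-y$. The graph of $A$ is a closed subspace, hence weakly closed, so $(0,z-y)$ lies in the graph. Therefore $z-y=A0=0$, i.e.\ $y=z\in\overline{D(A)}$.

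With $A$ closed, densely defined, $(0,\infty)\subset\rho(A)$ and $\|R(\lambda)\|\le1/\lambda$, Hille--Yosida yields that $A$ generates a $C_0$-semigroup of contractions.
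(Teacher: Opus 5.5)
Your proof is correct and complete. The paper gives no proof of this statement (it only cites Engel--Nagel, Corollary II.3.20), and your argument is the standard one behind that citation: dissipativity gives $\|(\lambda I-A)^{-1}\|\le 1/\lambda$ and spreads surjectivity from $\lambda_0$ to all of $(0,\infty)$, reflexivity together with weak compactness of $\{nR(n)y\}$ and weak closedness of the graph yields $\overline{D(A)}=\HH$, and Hille--Yosida concludes. In the paper's Hilbert-space setting $J(x)=\{x\}$, so your duality-map condition reduces to the inequality $\operatorname{Re}\langle AU,U\rangle_{\HH}\le 0$ that the paper verifies.
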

We have, 

\begin{thm}
Let $A:D(A) \subset \HH \rightarrow \HH$ be defined by \eqref{def:A}-\eqref{def:DA}. Then $A$ is the infinitesimal generator of a $C_0$ semigroup of contractions on $\HH$. 
\end{thm}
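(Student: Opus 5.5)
We verify the two hypotheses of the Lumer--Phillips Theorem on the reflexive Hilbert space $\HH$: dissipativity of $A$, and surjectivity of $A-\lambda_0 I$ for some $\lambda_0>0$ (we will take $\lambda_0=1$).

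\textbf{Dissipativity.} For $U=(u,v,\eta)\in D(A)$ we compute $\mathrm{Re}\langle AU,U\rangle_{\HH}$. The $H^1_0$ term gives $\int_\Omega \kappa \nabla v\cdot\overline{\nabla u}\,dx$. Integrating the $L^2$ term by parts, using $v\in H^1_0$, gives $-\int_\Omega\kappa\nabla u\cdot\overline{\nabla v}\,dx-\int_\Omega\int_0^\infty g a\nabla\eta\cdot\overline{\nabla v}\,ds\,dx$. The memory term is $\int_\Omega\int_0^\infty g a(\nabla v-\nabla\eta_s)\cdot\overline{\nabla\eta}\,ds\,dx$. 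Taking real parts, the $\kappa$-terms cancel, and the terms coupling $\nabla v$ and $\nabla\eta$ cancel as well. What remains is
\[
-\mathrm{Re}\int_\Omega\int_0^\infty g a\nabla\eta_s\cdot\overline{\nabla\eta}
=-\tfrac12\int_\Omega\int_0^\infty g(s)a\,\partial_s|\nabla\eta|^2
=\tfrac12\int_\Omega\int_0^\infty g'(s)a|\nabla\eta|^2\le0.
\]
Here we used $\eta(\cdot,0)=0$ and the vanishing of the boundary term $g(s)a|\nabla\eta|^2$ as $s\to\infty$, which is justified along a subsequence since $g|\nabla\eta|^2,\ g|\nabla\eta_s|^2\in L^1$. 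The assumption that $g$ is non-increasing then gives the sign.

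\textbf{Surjectivity of $I-A$.} Given $F=(f_1,f_2,f_3)\in\HH$, we solve $U-AU=F$ explicitly. The first equation gives $v=u-f_1$. The third equation is the ODE $\eta+\eta_s=v+f_3$ with $\eta(0)=0$, whose solution is
\[
\eta(s)=(1-e^{-s})(u-f_1)+\int_0^s e^{\sigma-s}f_3(\sigma)\,d\sigma .
\]
Substituting into the second equation yields an elliptic problem for $u$:
\[
u-\mathrm{div}\big(\tilde\kappa\nabla u\big)=f_1+f_2-\mathrm{div}\Big(a\int_0^\infty g(s)\big[(1-e^{-s})\nabla f_1-\textstyle\int_0^s e^{\sigma-s}\nabla f_3\,d\sigma\big]ds\Big),
\]
where $\tilde\kappa=\kappa+a\int_0^\infty g(1-e^{-s})\,ds\ge\kappa\ge C$. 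The right-hand side belongs to $H^{-1}$. For the $f_3$ part this requires the Cauchy--Schwarz bound
\[
\int_0^\infty g(s)\Big|\int_0^s e^{\sigma-s}\sqrt a\,\nabla f_3\Big|ds\le C\|f_3\|_{L^2(H^1_a)},
\]
which follows from $g(s)\le g(\sigma)$ for $s\ge\sigma$ and Young's inequality for the convolution with $e^{-s}$. Lax--Milgram then gives a unique $u\in H^1_0$, hence $v\in H^1_0$.

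It remains to check that $U\in D(A)$. We need $\eta\in L^2(\R^+;H^1_a)$ and $\eta_s=v+f_3-\eta\in L^2(\R^+;H^1_a)$; the second follows from the first, and the first follows from the same convolution estimate (using $g\in L^1$ for the constant-in-$s$ part). The combination $\mathrm{div}(\kappa\nabla u)+\int g\,\mathrm{div}(a\nabla\eta)$ equals $v-f_2\in L^2$ by construction, and $\eta(0)=0$ holds by construction.

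The main obstacle is this last step: the memory estimates in the weighted space where $H^1_a$ is only a seminorm quotient space. In particular, one must make sense of $\eta_s$ and of the trace $\eta(0)$ in this setting, which we would handle via the explicit formula together with the monotonicity of $g$.
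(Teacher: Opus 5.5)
Your proof is correct. The dissipativity half is the same computation as the paper's. You are slightly more careful there: you take real parts, which the complex space $\HH$ requires. You also justify the vanishing of $g(S)\int_\Omega a|\nabla\eta(S)|^2dx$ along a sequence $S\to\infty$ from the integrability of both $g|\sqrt a\nabla\eta|^2$ and $g|\sqrt a\nabla\eta_s|^2$, whereas the paper asserts it from $\eta\in L^2(\R^+;H^1_a)$ alone.

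The genuine difference is the range condition. The paper does not prove it; it imports it from \cite[Theorem 2.1]{Cavalcanti2}, with $\lambda_0=1+\tilde\omega$, $\tilde\omega\geq 1/2$, established there in a slightly different functional framework. You instead construct $(I-A)^{-1}F$ by hand:
\begin{itemize}
\item The transport equation in $s$ is solved explicitly, which is the $\lambda=1$ analogue of \eqref{relwvh}.
\item This reduces the problem to a coercive elliptic equation for $u$ with coefficient $\tilde\kappa=\kappa+a\int_0^\infty g(s)(1-e^{-s})\,ds=1-a\int_0^\infty g(s)e^{-s}\,ds\geq C$, which Lax--Milgram solves.
\item Membership in $D(A)$ then follows from the weighted bound $\int_\sigma^\infty g(s)e^{\sigma-s}\,ds\leq g(\sigma)$, via Fubini and Cauchy--Schwarz. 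This uses only the monotonicity of $g$, not $g\leq -cg'$.
\end{itemize}

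The paper's route is shorter. Yours is self-contained and gives $\lambda_0=1$ directly in the present space $\HH$, without relying on a result proved in another framework. It is also in the same spirit as the explicit resolvent computations the paper performs later.

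The obstacle you flag at the end is mild. Since $g$ is positive and non-increasing, the weight $g(s)\,ds$ is bounded below by $g(S)>0$ on each $[0,S]$. Hence $\eta_s\in L^2(\R^+;H^1_a)$ gives $\eta\in H^1_{loc}([0,\infty);H^1_a(\Omega))$, so the trace $\eta(0)$ is meaningful; for your explicit $\eta$ it vanishes by inspection.
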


\begin{proof}
The surjectivity is proven in \cite[Theorem 2.1]{Cavalcanti2} with $\lambda_0=1+\tilde{\omega}$, for $\tilde{\omega}\geq 1/2$. We turn to the dissipativity of $A$. Let  $U = (u,v,\eta) \in D(A)$. We have, 
\[
\langle AU, U \rangle_{\mathcal{H}} = I_1 + I_2 + I_3
\]
with 
\begin{align*}
I_1 &= \int_{\Omega}\kappa(x)  \nabla v(x) \cdot \nabla u(x) \, dx \\
I_2 &= \int_{\Omega}  \left( \textrm{div} (\kappa(x) \nabla u(x)) + \int_0^\infty g(s) \text{div}(a(x) \nabla \eta(x,s)) \, ds \right) v(x)  \, dx \\
I_3 &= \int_{\Omega} \int_0^\infty g(s) a(x) \nabla (v(x) - \eta_s(x,s)) \cdot \nabla \eta(x,s) \, ds \, dx.
\end{align*}
Integrating by parts and using $v\in H^1_0(\Omega)$, 
\[
I_2 = -\int_{\Omega} \kappa(x)\nabla u(x) \cdot \nabla v(x) \, dx - \int_{\Omega} \left( \int_0^\infty g(s) a(x) \nabla \eta(x,s) \cdot \nabla v(x) \, ds \right) dx.
\] 
Thus, 
\[
I_1+I_2= -\int_{\Omega} \left( \int_0^\infty g(s) a(x) \nabla \eta(x,s) \cdot \nabla v(x) \, ds \right) dx.
\]
Moreover, we have 
\[
I_3 =  \int_{\Omega} \int_0^\infty g(s) a(x) \nabla v(x) \cdot \nabla \eta(x,s) \, ds \, dx - \int_{\Omega} \int_0^\infty g(s) a(x) \nabla \eta_s(x,s) \cdot \nabla \eta(x,s) \, ds \, dx
\]
which, in turn, gives,
\[
I_1+I_2+I_3 = - \int_{\Omega} \int_0^\infty g(s) a(x) \nabla \eta_s(x,s) \cdot \nabla \eta(x,s) \, ds \, dx.
\]
Now, since $\eta(x,0)=0$ in $H^1_a(\Omega)$, we have $\sqrt a\nabla \eta(x,0)=0$ in $L^2(\Omega)$. Moreover, since $\eta \in L^2(\R^+;H^1_a(\Omega))$, we have, 
\[
\lim_{s \rightarrow \infty} g(s)\int_\Omega a(x)|\nabla \eta(x,s)|^2 dx = 0.
\] 
Integrating by parts on $(0,S)$ for $S>0$ and then letting $S \rightarrow\infty$ yields
\[
 - \int_{\Omega} \int_0^\infty g(s) a(x) \nabla \eta_s(x,s) \cdot \nabla \eta(x,s) \, ds \, dx = \dfrac{1}{2} \int_{\Omega} \int_0^\infty g'(s) a(x) |\nabla \eta(x,s)|^2 \, ds \, dx. 
\]
The right-hand side of the equation above is negative since $g'(s)<0$ as $g(s)$ is non-increasing. Hence, 
\[
\langle AU, U \rangle_{\mathcal{H}} \leq 0,
\]
which is sufficient to prove the dissipativity of $A$, 
\[
\langle (\lambda I-A) U, U \rangle_{\mathcal{H}} \geq \lambda \|U\|_{\mathcal{H}}^2
\]
for any $\lambda>0$. 
\end{proof}
\newline

We recall a variant of Lumers-Phillips Theorem
\begin{thm}
Let $A$ be a linear operator defined on a linear subspace $D(A)$ of the Banach space $X$. Then $A$ generates a quasi contraction semigroup if and only if
\begin{itemize}
\item $D(A)$ is dense in $X$;
\item $A$ is closed;
\item $A$ is quasidissipative, i.e. there exists a $\tilde{\omega}\geq 0$ such that $A - \tilde{\omega}I$ is dissipative;
\item $A - \lambda_0 I$ is surjective for some $\lambda_0 > \tilde{\omega}$, where I denotes the identity operator.
\end{itemize}
\end{thm}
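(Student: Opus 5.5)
The plan is to reduce the statement to the contraction case by the rescaling $B:=A-\tilde{\omega} I$. If $T(t)$ is a $C_0$-semigroup with generator $A$, then $S(t):=e^{-\tilde{\omega} t}T(t)$ is a $C_0$-semigroup with generator $B$, defined on the same domain $D(B)=D(A)$. Moreover, $\|T(t)\|\le e^{\tilde{\omega} t}$ holds if and only if $S(t)$ is a contraction semigroup. All four conditions in the statement are invariant under this shift, after replacing $\lambda_0$ by $\lambda_0-\tilde{\omega}>0$. Density of the domain and closedness are unchanged by adding a bounded multiple of the identity. Quasidissipativity of $A$ is, by definition, dissipativity of $B$. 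Finally, $A-\lambda_0 I=B-(\lambda_0-\tilde{\omega})I$. So it suffices to prove the characterization for $\tilde{\omega}=0$, i.e. a Banach-space Lumer--Phillips theorem with density assumed rather than deduced from reflexivity.

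For the necessity part, I will assume $S(t)$ is a contraction semigroup with generator $B$. The Hille--Yosida theorem gives that $D(B)$ is dense, that $B$ is closed, and that $(0,\infty)\subset\rho(B)$ with $\|(\lambda I-B)^{-1}\|\le 1/\lambda$. In particular $B-\lambda_0 I$ is surjective for every $\lambda_0>0$. Dissipativity follows from the resolvent bound. For $x\in D(B)$ and $\lambda>0$, write $x=(\lambda I-B)^{-1}(\lambda I-B)x$; this gives $\lambda\|x\|\le\|(\lambda I-B)x\|$, which is the Banach-space definition of dissipativity. In the Hilbert setting of the paper one can instead differentiate $\|S(t)x\|^2\le\|x\|^2$ at $t=0$ to get $\mathrm{Re}\langle Bx,x\rangle\le 0$.

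For the sufficiency part, I will assume $B$ is densely defined, closed and dissipative, and that $\mu_0I-B$ is surjective for some $\mu_0>0$. Dissipativity gives injectivity of $\mu_0 I-B$ and the bound $\|(\mu_0 I-B)^{-1}\|\le 1/\mu_0$, so $\mu_0\in\rho(B)$; closedness ensures the inverse is a bounded everywhere-defined operator. The step I expect to be the main obstacle is passing from this single $\mu_0$ to the whole half-line $(0,\infty)$. I will handle it by a connectedness argument. Let $\Lambda:=\rho(B)\cap(0,\infty)$, which is nonempty. It is open because $\rho(B)$ is open. For closedness in $(0,\infty)$, note that every $\mu\in\Lambda$ satisfies $\|(\mu I-B)^{-1}\|\le 1/\mu$ by dissipativity. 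The Neumann series then shows that the disc of radius $\mu$ around $\mu$ lies in $\rho(B)$. Hence if $\mu_n\in\Lambda$ and $\mu_n\to\mu>0$, then $\mu$ lies in the disc around $\mu_n$ for $n$ large, so $\mu\in\Lambda$. Connectedness of $(0,\infty)$ then gives $\Lambda=(0,\infty)$.

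With $(0,\infty)\subset\rho(B)$ and $\|(\lambda I-B)^{-1}\|\le 1/\lambda$ for all $\lambda>0$, together with density and closedness, the Hille--Yosida theorem yields that $B$ generates a contraction semigroup $S(t)$. Undoing the shift, $T(t)=e^{\tilde{\omega} t}S(t)$ is generated by $A$ and satisfies $\|T(t)\|\le e^{\tilde{\omega} t}$, i.e. it is a quasicontraction semigroup. This closes the equivalence.
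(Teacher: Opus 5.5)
The paper gives no proof of this statement. It recalls it as a standard textbook result, the rescaled Banach-space Lumer--Phillips theorem (cf.\ \cite{EN}), and uses only the necessity direction, to conclude in Lemma~\ref{lemm} that $A$ is closed and densely defined.

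Your argument is correct, and it is the standard proof.
\begin{itemize}
\item The shift $B=A-\tilde{\omega}I$ reduces everything to the contraction case.
\item In the necessity direction, the Hille--Yosida bound $\|(\lambda I-B)^{-1}\|\le 1/\lambda$ gives dissipativity in the Banach sense.
\item In the sufficiency direction, your open--closed argument on $\rho(B)\cap(0,\infty)$ works. Its key step is that the Neumann series gives a disc of radius $1/\|(\mu I-B)^{-1}\|\ge\mu$ around each $\mu$ in $\rho(B)\cap(0,\infty)$. This upgrades surjectivity at a single $\mu_0$ to the full Hille--Yosida hypothesis.
\end{itemize}

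Two small remarks.

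First, in the necessity direction, suppose the semigroup satisfies $\|T(t)\|\le e^{\omega t}$ with $\omega<0$. Then you should take $\tilde{\omega}=\max(\omega,0)$ to meet the requirement $\tilde{\omega}\geq 0$ in the statement.

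Second, in the sufficiency direction, it is the dissipativity estimate $\|(\mu_0 I-B)x\|\ge\mu_0\|x\|$, together with surjectivity, that makes $(\mu_0 I-B)^{-1}$ bounded and everywhere defined. Closedness plays no role there. In fact, a bounded everywhere-defined inverse is closed, so $B$ is automatically closed, and the closedness hypothesis is redundant in that direction.
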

to recover
\begin{lemma}\label{lemm}
The operator $A$ defined by \eqref{def:A}-\eqref{def:DA} is closed, dissipative and has a dense domain $D(A)$ in $\HH$. 
\end{lemma}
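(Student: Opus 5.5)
The plan is to derive Lemma \ref{lemm} from the preceding theorem (Lumer--Phillips) rather than verifying density and closedness by hand. Dissipativity was already shown in its proof: $\langle AU,U\rangle_{\HH}\le 0$ for every $U\in D(A)$. Together with the surjectivity of $A-\lambda_0 I$ for $\lambda_0=1+\tilde\omega>0$, taken from \cite[Theorem 2.1]{Cavalcanti2}, Lumer--Phillips shows that $A$ generates a $C_0$-semigroup of contractions. Any generator of a $C_0$-semigroup is closed and densely defined, so the lemma follows. To connect with the variant just recalled, we read it in the ``only if'' direction with $\tilde\omega=0$. A contraction semigroup is in particular a quasi-contraction semigroup, so the necessary conditions of the variant hold: $D(A)$ is dense in $\HH$ and $A$ is closed.

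For completeness, and to avoid leaning entirely on the abstract equivalence, I would also check both properties directly. For \emph{closedness}: since $A$ is dissipative and $\lambda_0 I-A$ is surjective, the dissipativity estimate gives $\|(\lambda_0 I-A)U\|_{\HH}\ge\lambda_0\|U\|_{\HH}$. Hence $\lambda_0 I-A$ is a bijection from $D(A)$ onto $\HH$ with bounded inverse. A bounded everywhere-defined operator is closed, and the inverse of a closed injective operator is closed. Therefore $\lambda_0 I-A$ is closed, and so is $A$.

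For \emph{density}: I would use the standard reflexive-space argument. Take $W\in\HH$ with $\langle W,U\rangle_{\HH}=0$ for all $U\in D(A)$, and write $W=(\lambda_0 I-A)U_0$ for some $U_0\in D(A)$. Choosing $U=U_0$ gives $0=\langle(\lambda_0 I-A)U_0,U_0\rangle_{\HH}\ge\lambda_0\|U_0\|_{\HH}^2$, so $U_0=0$ and hence $W=0$. This shows $D(A)^\perp=\{0\}$, so $D(A)$ is dense in the Hilbert space $\HH$.

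The step needing most care is the dissipativity identity in the quotient space $H^1_a$. The integration by parts in $s$ used the boundary term $\lim_{s\to\infty}g(s)\|\eta(s)\|^2_{H^1_a}=0$. For $U\in D(A)$ this is justified because $\eta,\eta_s\in L^2(\R^+;H^1_a)$ with weight $g$ and $g'\le 0$. Then $s\mapsto g(s)\|\eta(s)\|^2_{H^1_a}$ is absolutely continuous, its derivative $g'(s)\|\eta(s)\|^2_{H^1_a}+2g(s)\,\mathrm{Re}\,(\eta_s(s),\eta(s))_{H^1_a}$ has integrable second term, and its first term is of one sign. So the function has a limit at infinity, and that limit must be $0$ by integrability. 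Once this is granted, the rest is immediate, and the main obstacle reduces to correctly quoting the surjectivity result of \cite{Cavalcanti2} in the present weighted norm. That is legitimate because the norms are equivalent under Assumption \ref{assump:pa}.
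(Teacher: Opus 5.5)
Your proposal is correct and follows the paper's route: the paper also obtains closedness and density by reading the quasi-contraction variant of Lumer--Phillips in the ``only if'' direction for the contraction semigroup generated by $A$, with dissipativity taken from the preceding proof and surjectivity of $A-\lambda_0 I$ quoted from Cavalcanti et al. Your direct checks (closedness via the bounded inverse of $\lambda_0 I-A$, density via $D(A)^\perp=\{0\}$) are also sound. Your justification of $g(s)\|\eta(s)\|^2_{H^1_a}\to 0$ uses $\eta_s\in L^2(\R^+;H^1_a)$, and this fixes a weak point in the paper's dissipativity step, which invokes only $\eta\in L^2(\R^+;H^1_a)$ for this limit.
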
 
Lemma \ref{lemm} will be used in the next section to prove that  $A$ is closed and densely defined, which will be used in the next section.

\section{Exponential decay under GCC}\label{sec:main}

We now turn to the proof of Theorem \ref{thm:main}. The main argument is presented in this section, while some technical lemmas and the proof of the high frequency observability inequality are postponed to the next two sections. \newline

\begin{proof}
From Gearhart-Pr\"uss-Huang Theorem, the exponential decay is equivalent to 
\begin{equation}\label{proof:ires}
i\R \subset \rho(A)
\end{equation}
and
\begin{equation}\label{proof:sup}
\sup_{\mu \in \R } \|(i\mu I -A )^{-1}\| < \infty. 
\end{equation}
The strategy of the proof is the following. We first prove the uniform estimate
\begin{equation}\label{uniformresest}
\|U\|_{\HH}\leq C\|(i\mu I-A)U\|_{\HH},
\quad \mu\in\R,\quad U\in D(A).
\end{equation}
with $C>0$ independent of $\mu$. We then prove that $i\R\subset\rho(A)$. The uniform estimate \eqref{uniformresest} then gives \eqref{proof:sup} since the norm on the resolvent is the subordinate norm. \newline

\textbf{Step 1: Uniform estimate on the imaginary axis :}\newline

We argue by contradiction. Assume that there exists a sequence $\mu_n \in \R$ and $U_n=(u_n,v_n,\eta_n)\in D(A)$ such that $\|U_n\|_{\HH}=1$ and 
\[
(A-i \mu_n I )U_n = F_n \rightarrow 0, \quad \textrm{ in } \HH,
\]
where we denote $F_n=(f_n,g_n,h_n) \in \HH$. Up to extraction on the sequence $\mu_n$, either $|\mu_n|\rightarrow\infty$ or $\mu_n\rightarrow\mu\in\R$. Using the definition of $A$, 
\begin{equation}\label{sequence}
\begin{cases}
v_n- i\mu _n u_n=f_n \rightarrow 0, \hfill \text{   in   }H_0^1(\Omega ) \\ 
\textrm{div}(\kappa(x) \nabla u_n) +  \displaystyle \int _{0}^{\infty } g(s) \text{div}(a(x) \nabla \eta_n (s)) \,ds - i\mu _n v_n=g_n \rightarrow 0, \hfill \text{   in   }L^{2}(\Omega )\\
v_n - \partial _{s }\eta_n-i\mu _n \eta_n=h_n \rightarrow 0, \hfill \text{   in   }L^{2}(\mathbb{R}^{+};H_{a}^{1}(\Omega ))\end{cases}
\end{equation}
Using the dissipativity computations yield
\[
\operatorname{Re}\langle F_n,U_n\rangle_{\HH}
=\operatorname{Re}\langle AU_n,U_n\rangle_{\HH}
=\dfrac{1}{2} \int_0^\infty \int_\Omega g'(s)a(x) |\nabla \eta_n(x,s) |^2 dx ds.
\]
Since $\|U_n\|_{\HH}=1$ and $F_n\rightarrow0$ in $\HH$, we deduce
\[
\left|\operatorname{Re}\langle AU_n,U_n\rangle_{\HH}\right|
\leq \|U_n\|_{\HH} \|F_n\|_{\HH} \leq \|F_n\|_{\HH}  \rightarrow0,
\]
which leads to,
\begin{equation}\label{gradetantendvers0}
\int_0^\infty \int_\Omega g'(s)a(x) |\nabla \eta_n(x,s) |^2 dx ds \rightarrow 0,  \quad n\rightarrow \infty. 
\end{equation}
From $g(s) \leq -c g'(s)$, we recover, 
\begin{equation*}
\int_0^\infty \int_\Omega g(s)a(x) |\nabla \eta_n(x,s) |^2 dx ds \rightarrow 0,  \quad n\rightarrow \infty,
\end{equation*}
which is precisely, 
\begin{equation}\label{convetaforte}
\lim_{n\rightarrow \infty} \| \eta_n \|_{L^2(H^1_a)} =0.
\end{equation} 

We first give an equivalent formulation of  \eqref{sequence}. Since $\kappa(x)=1-g_0a(x)$, we have  
\begin{equation}\label{eq:wavehf}
\Delta u_n + \mu_n^2 u_n = i\mu_n f_n + g_n - \textrm{div} H_n, \quad \textrm{in } H^{-1}(\Omega),
\end{equation}
where 
\[
H_n(x) := \int_0^\infty g(s) a(x) \nabla( \eta_n(x,s) - u_n(x)) \, ds  \quad \textrm{in } L^2(\Omega;\CC^d).
\]
We underline that \eqref{eq:wavehf} only holds in $H^{-1}(\Omega)$ since, for $(u_n,v_n,\eta_n)\in D(A)$, we only have $u_n\in H^1_0(\Omega)$ and 
\[
\textrm{div}( \kappa(x) \nabla u_n(x)) + \displaystyle \int_0^\infty g(s) \textrm{div}(a(x) \nabla \eta_n(x,s)) \, ds \in L^2(\Omega)
\]
while each term of the sum belongs to $H^{-1}(\Omega)$. Second, we solve explicitly the third line of \eqref{sequence} using the first line of \eqref{sequence} and $\eta_n(.,0)=0$ since $\eta_n \in D(A)$, 
\begin{equation}\label{relwvh}
\eta_n(x,s) = (1-e^{-i\mu_n s})u_n(x)+ \left(\dfrac{1-e^{-i\mu_n s}}{i\mu_n}\right)f_n(x)-\int_0^s e^{i\mu_n (\sigma-s) }h_n(x,\sigma) d\sigma,  
\end{equation}
This equation holds in $ C(\R^+;H^1_a(\Omega))$ thanks to the regularity in the $s$-variable of the right-hand side. This solution motivates the change of variables
\begin{equation}\label{def:wn}
w_n = u_n + \dfrac{f_n}{i\mu_n }.
\end{equation}
We write \eqref{eq:wavehf} in the $w_n$ variable, 
\begin{equation}\label{eq:wavew}
\Delta w_n + \mu_n^2 w_n = g_n - \textrm{div} J_n, \quad \textrm{in } H^{-1}(\Omega),
\end{equation}
with 
\begin{equation}\label{def:Jn}
J_n := H_n -  \dfrac{\nabla f_n}{i\mu_n }, \quad \textrm{in } L^2(\Omega;\CC^d). 
\end{equation}
We also write \eqref{relwvh} as, 
\begin{equation}\label{relwn}
\eta_n(x,s) = (1-e^{-i\mu_n s})w_n(x)-(\mathcal{K}_{\mu_n}h_n)(s) , \quad  \forall n \in \N. 
\end{equation}
where, to ease the notation, we denote 
\begin{equation}\label{defknu}
(\mathcal{K}_{\mu_n}h)(s) := \int_0^s e^{i\mu_n (\sigma-s) }h(x,\sigma) d\sigma.
\end{equation}
Two main ingredients are used to conclude the contradiction argument: translate the observability in $\eta_n$ to $w_n$ and propagate the observability in $\omega$ to $\Omega$. The following technical lemma - whose proof is given in Section \ref{sec:lemmatech} - establishes the observability in the $w_n$ variable, as well as a high frequency estimate for $J_n$. 
\begin{lemma}\label{convaunabla}
Let $w_n$ be defined by \eqref{def:wn} and $J_n$ by \eqref{def:Jn}. Assume $\mu_n \rightarrow \mu^* \in ( \R \cup \{ \pm \infty \} ) \setminus \{0\}$. Then, 
\begin{equation}\label{wnobs}
\lim_{n\rightarrow \infty} \big\| \sqrt{a(.)}\nabla w_n \big\|_{L^2(\Omega;\CC^d)} = 0,
\end{equation}
and 
\begin{equation}\label{Jnconv}
\lim_{n\rightarrow \infty} |\mu_n | \, \| J_n \|_{L^2(\Omega;\CC^d)} = 0.
\end{equation}
\end{lemma}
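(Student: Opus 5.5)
The plan is to exploit the explicit formula \eqref{relwn} together with the strong convergence \eqref{convetaforte}. Write $\eta_n + \mathcal{K}_{\mu_n}h_n = (1-e^{-i\mu_n s})w_n$. First we check that $\mathcal{K}_{\mu_n}$ is bounded on $L^2(\R^+;H^1_a)$ uniformly in $n$, and even with norm $O(1/|\mu_n|)$ in a suitable sense. The uniform bound comes from a Young/Hardy-type inequality with the weight $g$. Since $g$ is non-increasing and $g\le -cg'$, we have $g(s)\le g(\sigma)e^{-(s-\sigma)/c}$ for $s\ge\sigma$. Hence
\[
\int_0^\infty g(s)\Big|\int_0^s |h(\sigma)|\,d\sigma\Big|^2 ds \le C\int_0^\infty g(\sigma)|h(\sigma)|^2 d\sigma,
\]
by Cauchy--Schwarz with an exponential kernel. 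It follows that $\|\mathcal{K}_{\mu_n}h_n\|_{L^2(H^1_a)}\le C\|h_n\|\to0$. Consequently $\|(1-e^{-i\mu_n s})w_n\|_{L^2(H^1_a)}\to 0$, which reads
\[
\Big(\int_0^\infty g(s)|1-e^{-i\mu_n s}|^2ds\Big)\,\|\sqrt a\nabla w_n\|_{L^2}^2\to 0 .
\]
Since $\mu_n\to\mu^*\neq0$, the factor $c_n=\int_0^\infty g(s)|1-e^{-i\mu_ns}|^2ds=2\int g(1-\cos\mu_n s)\,ds$ stays bounded below. For finite $\mu^*$ this follows by continuity and $g>0$. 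For $|\mu_n|\to\infty$ it follows from Riemann--Lebesgue, since $c_n\to 2g_0>0$. This gives \eqref{wnobs}.

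For \eqref{Jnconv}, we compute $J_n$ explicitly. Inserting \eqref{relwvh} into $H_n$ gives
\[
H_n=\int_0^\infty g(s)a\nabla(\eta_n-u_n)\,ds = -\hat g(\mu_n)\,a\nabla w_n +\frac{g_0}{i\mu_n}a\nabla f_n-\int_0^\infty g(s)\,a\nabla(\mathcal{K}_{\mu_n}h_n)(s)\,ds ,
\]
where $\hat g(\mu)=\int_0^\infty g(s)e^{-i\mu s}ds$. The terms in $\nabla f_n/(i\mu_n)$ combine with the one in \eqref{def:Jn}, and after multiplication by $\mu_n$ they are $O(\|f_n\|_{H^1_0})\to0$. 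For the first term, integrate by parts in $s$ using $g\in W^{1,1}$:
\[
i\mu_n\hat g(\mu_n)=g(0)+\int_0^\infty g'(s)e^{-i\mu_ns}ds ,
\]
so $|\mu_n\hat g(\mu_n)|\le 2g(0)$ is bounded. Then $|\mu_n|\,\|\hat g(\mu_n)a\nabla w_n\|\le C\|a\|_\infty^{1/2}\|\sqrt a\nabla w_n\|\to0$ by \eqref{wnobs}.

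The main obstacle is the memory term $\mu_n\int_0^\infty g(s)\,a\nabla(\mathcal{K}_{\mu_n}h_n)(s)\,ds$, since $h_n$ is only small in $L^2(H^1_a)$ with no rate. We would use Fubini to rewrite it as
\[
\int_0^\infty a\nabla h_n(\sigma)\Big(\mu_n\int_\sigma^\infty g(s)e^{i\mu_n(\sigma-s)}ds\Big)d\sigma ,
\]
and integrate by parts in $s$ inside. This gives
\[
\mu_n\int_\sigma^\infty g(s)e^{i\mu_n(\sigma-s)}ds=-i\Big(g(\sigma)+\int_\sigma^\infty g'(s)e^{i\mu_n(\sigma-s)}ds\Big),
\]
which is bounded by $2g(\sigma)$. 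Cauchy--Schwarz with the weight $g$ then bounds the term by $C\|a\|_\infty^{1/2}\big(\int_0^\infty g\big)^{1/2}\|h_n\|_{L^2(H^1_a)}\to0$. In the same way, with low frequency $\mu^*$ finite nonzero the factor $\mu_n$ is harmless. This yields \eqref{Jnconv}.
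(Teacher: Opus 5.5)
Your proof is correct and follows essentially the paper's argument. Part \eqref{wnobs} comes from $\eta_n+\mathcal{K}_{\mu_n}h_n=(1-e^{-i\mu_n s})w_n$ together with a positive lower bound on $\int_0^\infty g(s)|1-e^{-i\mu_n s}|^2\,ds$. Part \eqref{Jnconv} comes from the same explicit decomposition of $H_n$, with $|\mu_n\hat g(\mu_n)|\le 2g(0)$ and the Fubini/integration-by-parts bound $\bigl|\mu_n\int_\sigma^\infty g(s)e^{i\mu_n(\sigma-s)}\,ds\bigr|\le 2g(\sigma)$; you state this last bound more explicitly than the paper's appeal to \eqref{estimghat}. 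The only deviations are minor and both routes give the same constant $2c$: you bound $\mathcal{K}_{\mu_n}$ via $g(s)\le g(\sigma)e^{-(s-\sigma)/c}$ and Young's inequality with an exponential kernel rather than by integration by parts against $g$, and for finite $\mu^*$ you get the lower bound on $C_{\mu_n}$ by continuity rather than by an explicit estimate.
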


\noindent The rest of the proof of Step 1 is divided in two regimes. \newline 

\textbf{High Frequency:}\newline

\noindent We begin with the case $|\mu_n| \rightarrow \infty$.

\begin{lemma}\label{lem:hfobs}
There exist $C>0$ and $\mu^*>0$ such that, for any $|\mu|\geq \mu^*$, every $w\in H^1_0(\Omega)$, $g\in L^2(\Omega)$ and $J\in L^2(\Omega;\CC^d)$ satisfying 
\[
(\Delta + \mu^2 ) w = g - \textrm{div} J, \quad \textrm{in } H^{-1}(\Omega),
\]
satisfies also the high frequency observability inequality,
\begin{equation}\label{obs:hf}
\| \nabla w \|^2_{L^2} + \mu^2 \|  w \|^2_{L^2} \leq C \left( \int_{\Omega} a(x) | \nabla w(x) |^2 dx + \|g\|_{L^2}^2 + \mu^2 \| J \|_{L^2(\Omega;\CC^d)}^2 \right).
\end{equation}
\end{lemma}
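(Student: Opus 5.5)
We argue by contradiction and reduce to a semiclassical statement. Suppose the inequality \eqref{obs:hf} fails. Then there are sequences $|\mu_k|\to\infty$ and $w_k,g_k,J_k$ solving $(\Delta+\mu_k^2)w_k=g_k-\textrm{div} J_k$, normalized by $\|\nabla w_k\|_{L^2}^2+\mu_k^2\|w_k\|_{L^2}^2=1$, and such that
\[
\int_\Omega a|\nabla w_k|^2\,dx+\|g_k\|_{L^2}^2+\mu_k^2\|J_k\|_{L^2}^2\to 0 .
\]
Set $h=h_k=1/|\mu_k|$ and $y_h=\mu_k w_k$. Then $y_h$ is bounded in $L^2$, $h\nabla y_h$ is bounded in $L^2$, and
\[
(h^2\Delta+1)y_h = h\big(h g_k\mu_k\big)/h\cdots =: h^2\mu_k g_k - h^2\mu_k\,\textrm{div} J_k .
\]
Written this way, the right-hand side is $h\,r_h - h\,(h\,\textrm{div})(\mu_k J_k)$ with $r_h=hg_k\cdot|\mu_k|/\mu_k\to0$ in $L^2$. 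The divergence term is $h\,\textrm{div}_h$ applied to $o(1)$ in $L^2$. It is therefore $o(h)$ only in $H^{-1}_{sc}$, not in $L^2$. This is the derivative loss the paper refers to, and we cannot apply Theorem \ref{thm:burq-boundary-propagation} directly.

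\textbf{Step 1: frequency cutoff.} Let $-\Delta_D$ be the Dirichlet Laplacian and $\chi\in C_c^\infty((1/2,2))$ with $\chi=1$ near $1$. Split $y_h=y_h^1+y_h^2$ with $y_h^1=\chi(-h^2\Delta_D)y_h$.
\begin{itemize}
\item For $y_h^2$, the symbol $1-h^2\lambda$ is invertible on the support of $1-\chi$, uniformly, with $|1-h^2\lambda|\gtrsim 1+h^2\lambda$. By the spectral theorem, $(1-\chi)(-h^2\Delta_D)(h^2\Delta+1)^{-1}$ maps $H^{-1}_{sc}\to H^1_{sc}$ with bounded norm. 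Hence $\|y_h^2\|_{H^1_{sc}}\lesssim h^{-1}\|(h^2\Delta+1)y_h\|_{H^{-1}_{sc}}=o(1)$, since $\|h\,\textrm{div}(\mu_kJ_k)\|_{H^{-1}_{sc}}\lesssim \|\mu_k J_k\|_{L^2}$.
\item For $y_h^1$, the operator $\chi(-h^2\Delta_D)$ commutes with $h^2\Delta_D+1$, and $\chi(-h^2\Delta_D)h\,\textrm{div}$ is bounded $L^2\to L^2$ uniformly in $h$ (by duality, since $h\nabla\chi(-h^2\Delta_D)$ is bounded on $L^2$). So $(h^2\Delta+1)y_h^1=o(h)$ in $L^2$, and $y_h^1\in D(\Delta_D)$ satisfies the hypotheses of Theorem \ref{thm:burq-boundary-propagation}.
\end{itemize}

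\textbf{Step 2: propagation.} Attach to $y_h^1$, after extraction, a semiclassical defect measure $\nu$. It is supported in the characteristic set $\{|\xi|=1\}$ and is invariant along generalized bicharacteristics, by Theorem \ref{thm:burq-boundary-propagation}. From $\int a|\nabla w_k|^2=\int a|h\nabla y_h|^2\to0$, and since $\|h\nabla y_h^2\|_{L^2}=o(1)$, we get $\int a|h\nabla y_h^1|^2\to0$. Thus $\nu$ vanishes over $\omega$: on the characteristic set $|\xi|=1$, so $a|\xi|^2$ controls the mass there. GCC says every generalized ray enters $\omega$, so invariance gives $\nu\equiv0$. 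To conclude strong convergence $y_h^1\to0$ in $L^2$, we also need to rule out mass escaping to the boundary or concentrating in $\bar\Omega\setminus\Omega$. Burq's boundary measure handles this, together with non-concentration of $h\partial_\nu y_h^1$ on $\partial\Omega$, which is part of the statement of Theorem \ref{thm:burq-boundary-propagation}.

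\textbf{Step 3: contradiction.} Once $\|y_h^1\|_{L^2}\to0$, multiply the equation by $\bar y_h^1$ and integrate by parts (energy identity). This gives $\|h\nabla y_h^1\|^2=\|y_h^1\|^2+o(1)\to0$. Combined with Step 1, $\|h\nabla y_h\|^2+\|y_h\|^2\to0$, i.e.\ $\|\nabla w_k\|^2+\mu_k^2\|w_k\|^2\to0$. This contradicts the normalization.

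The main obstacle is Step 1 with the Dirichlet boundary. We must verify that $\chi(-h^2\Delta_D)\,h\,\textrm{div}$ is uniformly $L^2$-bounded and that $y_h^1$ has exactly the regularity and right-hand side required by Theorem \ref{thm:burq-boundary-propagation}. A secondary issue is transferring the observation on $\omega$, given through the gradient weighted by $a$, to vanishing of $\nu$ over $\omega$ up to the boundary. We would try to settle both with spectral calculus and a localized Gårding-type argument using $\partial\Omega\in C^3$.
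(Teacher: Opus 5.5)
Your proposal is correct and follows the paper's proof essentially step for step. The steps are the same: contradiction with the normalization $\|\nabla w\|^2+\mu^2\|w\|^2=1$, the rescaling $y_h=\mu w$, the split by $\chi(-h^2\Delta_D)$ so that the localized piece satisfies $\|(h^2\Delta+1)y_h^1\|_{L^2}=o(h)$ while the complement is controlled elliptically in $H^1_{sc}$ by the $o(h)$ bound in $H^{-1}_{sc}$, and then Burq's measure, its vanishing over $\omega$, and GCC. The differences are minor: you close with the energy identity instead of the spectral bound $\|h\nabla v_h\|\le C\|v_h\|$, and your duality argument for the uniform $L^2$-boundedness of $\chi(-h^2\Delta_D)h\,\textrm{div}$ makes explicit what the paper only sketches. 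Also, $r_h$ should be $h\mu_k g_k=\pm g_k$ rather than $hg_k|\mu_k|/\mu_k$, which does not affect the argument.
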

In Lemma \ref{lem:hfobs}, the loss of derivative of $J$ is compensated by a decay with respect of the parameter $\mu$. Its proof is found in Section \ref{sec:semiclassic}.\newline

We use Lemma \ref{convaunabla} and $g_n \rightarrow 0 $ in $L^2(\Omega)$ in Lemma \ref{lem:hfobs} to deduce that the right-hand side of \eqref{obs:hf} tends to zero as $n\rightarrow \infty$. We thus obtain, 
\begin{equation}\label{strconvw}
\lim_{n \rightarrow \infty } \left( \|\nabla w_n \|^2_{L^2} + \mu_n^2 \| w_n \|^2_{L^2} \right)= 0.
\end{equation}
Since $f_n \rightarrow 0$ in $H^1_0(\Omega)$, we have by the definition \eqref{def:wn} of $w_n$  that 
\[
\lim_{n\rightarrow \infty } \| \nabla u_n \|_{L^2(\Omega)} =0 
\]
Moreover, since $f_n \rightarrow 0$ in $H^1_0(\Omega)$, we deduce by Poincaré's inequality $f_n \rightarrow 0$ in $L^2(\Omega)$, which implies, together with \eqref{strconvw}, 
\[
|\mu_n|\|u_n\|_{L^2}
\leq |\mu_n|\|w_n\|_{L^2}+\|f_n\|_{L^2}
\longrightarrow0,
\]
that is, 
\begin{equation}\label{L2unconv}
\lim_{n\rightarrow \infty } \mu_n^2 \|u_n \|_{L^2(\Omega)}^2 =0 
\end{equation}
Using \eqref{L2unconv} and the first equation of \eqref{sequence}, we conclude 
\begin{equation*}
\lim_{n\rightarrow \infty } \|v_n \|_{L^2(\Omega)}^2 =0 
\end{equation*}
Together with \eqref{convetaforte}, we conclude 
\[
\lim_{n\rightarrow \infty} \| (u_n,v_n,\eta_n) \|_{\HH} =0
\]
a contradiction with $ \| (u_n,v_n,\eta_n) \|_{\HH} =1$.   \newline

\textbf{Low frequency :}\newline
 
\noindent We consider now the case $\mu_n \rightarrow \mu \in \R$. Recall that \eqref{convetaforte} gives $\eta_n\rightarrow0$ in $L^2(\R^+;H^1_a(\Omega))$. We distinguish two cases. First, if $\mu=0$, then the first equation of \eqref{sequence} gives $v_n=f_n\to 0$ in $H^1_0(\Omega)$. The second equation of  \eqref{sequence}, using the boundedness of $u_n$ in $H^1_0(\Omega)$, yields, 
\[
\int_\Omega\kappa(x)|\nabla u_n|^2\,dx \rightarrow0.
\]
Since this is an equivalent norm of $H^1_0(\Omega)$, we recover $U_n \to 0$ in $\HH$, a contradiction. We thus turn to the case $\mu \neq 0$. Since $u_n$ is bounded in $H^1_0(\Omega)$, we deduce that there exists a subsequence such that 
\[
u_n \rightharpoonup u \quad \text{in } H^1_0(\Omega),\qquad u_n \rightarrow u \quad \text{in } L^2(\Omega).
\]
From Lemma \ref{convaunabla}, we have, 
\[
\|\sqrt a\nabla w_n\|_{L^2(\Omega;\CC^d)}\rightarrow 0.
\]
Combined with $f_n/i\mu_n \rightarrow 0$ in $H^1_0(\Omega)$, we obtain
\[
\sqrt{a(x)} \nabla u(x)=0\quad\textrm{in }L^2(\Omega;\CC^d).
\]
Passing to the limit in the second equation of \eqref{sequence} gives
\[
\operatorname{div}(\kappa (x) \nabla u)+\mu^2u=0, \quad \text{ in } H^{-1}(\Omega).
\]
Since $a(x) \nabla u=0$ and $\kappa=1-g_0a(x)$, this reduces to
\begin{equation}\label{Deltalimite}
\Delta u+\mu^2u=0,
\end{equation}
which holds in $C^\infty(\Omega)$ from elliptic regularity. On the open set $\omega=\{a>0\}$, one has $\nabla u=0$, hence $u$ is constant over $\omega$. But using \eqref{Deltalimite} over $\omega$ and $\mu\neq0$, one deduces $u=0$ on $\omega$. Finally, the unique-continuation for the operator $(\Delta+\mu^2)$ gives $u=0$ in $C^\infty(\Omega)$. This implies $v_n \rightarrow 0$ in $L^2(\Omega)$, and together with \eqref{convetaforte}, we recover $U_n \rightarrow 0$ in $\HH$. This therefore concludes the proof of Step 1. \newline

\textbf{Step 2: the resolvent set includes the imaginary axis}\newline

\noindent The estimate proved in Step~1 shows, for every $\mu\in\R$, that $i\mu I-A$ is injective and has closed range. It remains to prove that this range is dense. We prove 
\[
\operatorname{Ran}(i\mu I-A)^\perp
=\operatorname{Ker}(-i\mu I-A^*)=\{0\}.
\]
Let us first detail the computations for the adjoint. Let $U=(u,v,\eta)\in D(A)$ and let $V=(p,q,\zeta)$ be smooth enough. From the definition of $A$, we have
\[
\begin{aligned}
\langle AU,V\rangle_{\HH}
={}&\int_\Omega \kappa(x) \nabla v(x) \cdot \nabla p(x) \,dx\\
&+\int_\Omega\left(\operatorname{div}(\kappa(x) \nabla u(x) )
+\int_0^\infty g(s)\operatorname{div}(a(x)\nabla \eta(x,s))\,ds\right)
 q\,dx\\
&+\int_0^\infty\int_\Omega g(s)a(x)
\nabla(v(x)-\eta_s(x,s))\cdot\nabla \zeta(x,s)\,dxds.
\end{aligned}
\]
Integrations by part in $x$ and using $q\in H_0^1(\Omega)$ gives
\[
\begin{aligned}
\langle AU,V\rangle_{\HH}
={}&-\int_\Omega\kappa(x) \nabla u(x) \cdot\nabla q(x)\,dx
-\int_\Omega v(x) \textrm{div}( \kappa(x) \nabla p(x)) \,dx \\
&-\int_0^\infty\int_\Omega g(s)a(x) \nabla \eta(x,s)
\cdot\nabla  q(x) \,dxds \\
& -\int_0^\infty\int_\Omega g(s)v(x)
\textrm{div}( a(x) \nabla \zeta(x,s) )\,dxds\\
&-\int_0^\infty\int_\Omega g(s)a(x)\nabla \eta_s(x,s)
\cdot\nabla \zeta(x,s) \,dxds.
\end{aligned}
\]
For the last term, integrating by parts in $s$ and using $\eta(x,0)=0$, 
\[
\begin{aligned}
-\int_0^\infty\int_\Omega g(s)a(x) & \, \nabla \eta_s(x,s)
\cdot\nabla \zeta(x,s) \,dxds \\
&=\int_0^\infty\int_\Omega a(x)\nabla \eta(x,s)
\cdot  \left(g(s) \nabla \zeta_s(x,s)+g'(s) \nabla \zeta(x,s) \right)\,dxds \\
&=\int_0^\infty\int_\Omega g(s) a(x)\nabla \eta(x,s)
\cdot  \nabla \left(  \zeta_s(x,s)+\dfrac{g'(s)}{g(s)}  \zeta(x,s) \right)\,dxds 
\end{aligned}
\]
since $g(s)>0, s\geq 0$. We finally gather the terms associated to the scalar product of $\HH$, 
\[
\begin{aligned}
\langle AU,V\rangle_{\HH}
={}& - \int_\Omega\kappa\nabla u(x)\cdot \nabla q(x)\,dx\\
&-\int_\Omega v(x) \operatorname{div}\left(
\kappa(x) \nabla p(x)+\int_0^\infty g(s)a(x) \nabla \zeta(x,s)\,ds\right)  
\,dx \\
&+\int_0^\infty\int_\Omega g(s) a(x)\nabla \eta(x,s)
\cdot \nabla \left( -q(x)+  \zeta_s(x,s)+\dfrac{g'(s)}{g(s)}\zeta(x,s) \right)\,dxds 
\end{aligned}
\]
Consequently,
\[
A^*\begin{pmatrix}p\\q\\\zeta\end{pmatrix}
=\begin{pmatrix}
-q\\
-\operatorname{div}( \kappa(x) \nabla p(x))-\displaystyle\int_0^\infty g(s) \operatorname{div}( a(x) \nabla \zeta(x,s))\,ds \\
-q(x)+\zeta_s(x,s)+\dfrac{g'(s)}{g(s)}\zeta(x,s)
\end{pmatrix}
\]
with domain
\[
\begin{aligned}
D(A^*)=\Bigg\{(p,q,\zeta)\in\HH\ \Bigg|\;&q\in H_0^1(\Omega),\quad
\zeta_s+\dfrac{g'(s)}{g(s)}\zeta\in L^2(\R^+;H_a^1(\Omega)),\\
& \left( \operatorname{div}( \kappa(x) \nabla p(x))+\displaystyle\int_0^\infty g(s) \operatorname{div}( a(x) \nabla \zeta(x,s))\,ds \right) \in L^2(\Omega)
\Bigg\}.
\end{aligned}
\]
In contrast with the domain of $A$, no boundary condition at $s=0$ is imposed on $\zeta$. Now, let $V=(p,q,\zeta)\in D(A^*)$ satisfying $A^*V=-i\mu V$, 
that is, 
\begin{equation}\label{vpadjoint}
\begin{pmatrix}
-q\\
-\operatorname{div}( \kappa(x) \nabla p(x))-\displaystyle\int_0^\infty g(s) \operatorname{div}( a(x) \nabla \zeta(x,s))\,ds \\
-q(x)+\zeta_s(x,s)+\dfrac{g'(s)}{g(s)}\zeta(x,s)
\end{pmatrix}
= -i \mu \begin{pmatrix}p\\q\\\zeta\end{pmatrix} 
\end{equation}
Arguing with the real part of the scalar product over $\HH$, 
\[
\operatorname{Re}\langle A^*V,V\rangle_{\HH}
=\dfrac12\int_0^\infty\int_\Omega g'(s)a(x)|\nabla \zeta(x,s)|^2\,dxds
-\dfrac{g(0)}2\int_\Omega a(x)|\nabla \zeta(x,0)|^2\,dx.
\]
Since $\operatorname{Re}\langle A^*V,V\rangle_{\HH} =\operatorname{Re}( -i\mu \|V\|_{\HH}^2)=0$, this computation above is well-defined. Moreover, we deduce 
\begin{equation}\label{zeroegalzero}
\dfrac12\int_0^\infty\int_\Omega g'(s)a(x)|\nabla \zeta(x,s)|^2\,dxds = \dfrac{g(0)}2\int_\Omega a(x)|\nabla \zeta(x,0)|^2\,dx.
\end{equation}
But by assumption, $g(0)>0$ and $g'(s)<0$ since $g$ is non-increasing. Thus, the right-hand side of the previous equation is nonnegative while the left-hand side is nonpositive, which forces both terms to be equal to zero. In particular, using the assumption $g(s)\leq-cg'(s)$
\begin{align}
0 &=-c \int_0^\infty\int_\Omega g'(s)a(x)|\nabla \zeta(x,s)|^2\,dxds  \label{derivzeta0} \\
 &\geq \int_0^\infty\int_\Omega g(s)a(x)|\nabla \zeta(x,s)|^2\,dxds \nonumber \\
  &= \|\zeta\|_{L^2(H^1_a)}^2 \label{zetanorm0}
\end{align}
Thus, since $g(s)$ is real-valued, we deduce from the third line of \eqref{vpadjoint},
\begin{align*}
\| \zeta_s - q\|_{L^2(H^1_a)}^2 & = \int_0^\infty \int_\Omega  g(s)  a(x) \left| \dfrac{g'(s)}{g(s)} + i\mu \right|^2 \left| \nabla \zeta(x,s) \right|^2 dx ds \\
& \leq 2 \mu^2 \int_0^\infty   \int_\Omega g(s) a(x)  \left| \nabla \zeta(x,s) \right|^2 dx ds \\ 
& \quad + \dfrac{2}{c} \int_0^\infty  \int_\Omega   g'(s)a(x)   \left| \nabla \zeta(x,s) \right|^2 dx ds \\
&=0
\end{align*}
using $g\leq-cg'$, \eqref{derivzeta0} and \eqref{zetanorm0}. Thus, 
\[
\zeta_s(x,s)=q(x), \quad \text{in } L^2(\R^+;H^1_a(\Omega)).
\]
We finally obtain,
\begin{align*}
\int_0^\infty \int_\Omega g(s) a(x) | \nabla q(x) |^2 dx ds  =&  \int_0^\infty \int_\Omega g(s) a(x) \nabla \zeta_s(x,s) \cdot \nabla \overline{ q(x) }  dx ds  \\
=& - \int_0^\infty \int_\Omega g'(s) a(x) \nabla  \zeta(x,s) \cdot \nabla \overline{ q(x) } dx ds \\
&  - g(0) \int_\Omega a(x)  \nabla  \zeta(x,0) \cdot \nabla \overline{ q(x) }  dx 
\end{align*}
and using Cauchy-Schwarz together with the fact that \eqref{zeroegalzero} is equal to zero, 
\begin{align*}
\Bigg|  \int_0^\infty & \int_\Omega g(s) a(x) | \nabla q(x) |^2 dx ds \Bigg|^2 \\ \leq &  \left( \int_0^\infty \int_\Omega g'(s) a(x) | \nabla  \zeta(x,s) |^2  dx ds \right)   \left( \int_0^\infty \int_\Omega g'(s) a(x) | \nabla  q(x) |^2  dx ds \right) \\
& + \left( g(0) \int_\Omega  a(x) | \nabla  \zeta(x,0) |^2  dx  \right) \left(  \int_\Omega  a(x) | \nabla  q(x) |^2  dx  \right)  \\
=& \, 0.
\end{align*}
Using the first two equations of \eqref{vpadjoint} finally yields, 
\[
\Delta q + \mu^2 q=0,
\qquad
\sqrt a\nabla q=0.
\]
Arguing again by unique continuation allows us to conclude $V=0$, and consequently $\operatorname{Ker}(-i\mu I-A^*)=\{0\}$. The range of $i\mu I-A$ is therefore both closed and dense, hence equal to $\HH$. Thus $i\R\subset\rho(A)$. Moreover, the uniform estimate of Step~1 yields
\[
\sup_{\mu\in\R}\|(i\mu I-A)^{-1}\|_{\mathcal L(\HH)}<\infty.
\]
This concludes the contradiction argument, allowing to prove the exponential stability using Gearhart-Pr\"uss-Huang's Theorem.
\end{proof}

\section{High frequency observability inequality}\label{sec:semiclassic}

We now turn to the proof of Lemma \ref{lem:hfobs}. We use semiclassical measures up to the boundary in the sense of \cite[Section~4.2]{BurqNontrapping}. We denote
\[
\Sigma_b
=
j\bigl(\{(x,\xi)\in T^*\overline{\Omega};\ |\xi|^2=1\}\bigr)
\]
the compressed characteristic set. Here $j$ is the canonical map near the boundary: in local coordinates $(x',x_n)\in \R^{d-1}\times \R$ with $\Omega =\{x_n >0\}$, we have $j(x,\xi)=(x',x_n,\xi', x_n \xi_n)$ where $\xi=(\xi',\xi_n)\in \R^{d-1}\times \R$. We refer the reader to \cite{timedependentwave} for a presentation of the compressed cotangent bundle $T_b^{*}(\overline{\Omega})$ of Melrose-Sj{\"o}strand \cite{MelroseSjostrand1978, MelroseSjostrand1982}. 

We recall that in \cite{BurqNontrapping}, the pseudodifferential operators $A$ near the boundary are a sum of interior semiclassical pseudodifferential operators and tangential semiclassical pseudodifferential operators in local coordinates with compactly supported symbols. The notation $\sigma(A)$ denotes the semiclassical symbol of $A$. 
\begin{thm}\label{thm:burq-boundary-propagation}
Let $\Omega$ be a bounded domain of $\R^d$ with boundary of
class $C^3$. Let
\[
v_h\in H^2(\Omega)\cap H^1_0(\Omega),
\qquad
\sup_h\|v_h\|_{L^2(\Omega)}<+\infty,
\]
and assume that
\[
\|(h^2\Delta+1)v_h\|_{L^2(\Omega)}=o(h).
\]
After extraction of a subsequence, there exists a positive
Radon measure $\nu$ on $\Sigma_b$ such that
\[
\lim_{h\to0}(A_hv_h,v_h)_{L^2(\Omega)}
=
\int_{\Sigma_b}\sigma(A_h)\,d\nu.
\]

The support of $\nu$ is a union of generalized
bicharacteristics. Finally, $\nu$ does not charge the diffractive set $\mathcal{G}^{2,+}$. 
\end{thm}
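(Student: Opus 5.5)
Since this is Burq's result \cite[Section~4]{BurqNontrapping}, I would reconstruct the proof in three layers: existence of the measure, localization on $\Sigma_b$, and propagation along generalized bicharacteristics.

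\textbf{Existence and a priori bounds.} First I would obtain uniform bounds on $v_h$. Pairing $(h^2\Delta+1)v_h$ with $v_h$ and using $v_h\in H^1_0(\Omega)$ gives
\[
\|h\nabla v_h\|_{L^2}^2=\|v_h\|_{L^2}^2-\operatorname{Re}\big((h^2\Delta+1)v_h,v_h\big)=O(1).
\]
Next I would use a Rellich-type multiplier $h\,X\cdot\nabla$, with $X$ a vector field extending the normal near $\d\Omega$. This bounds the trace $\|h\d_n v_h\|_{L^2(\d\Omega)}$ uniformly; this step uses the $C^3$ boundary and the $H^2$ regularity. The operators $A_h$ are interior semiclassical operators plus tangential ones with compactly supported symbols, so $(A_hv_h,v_h)$ is bounded by $\sup|\sigma(A_h)|+o(1)$. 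The sharp G\aa rding inequality, applied in the tangential calculus, gives positivity in the limit. A diagonal extraction over a countable dense set of symbols then yields a positive linear functional, hence a Radon measure $\nu$ on $T^*_b\overline{\Omega}$.

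\textbf{Support in $\Sigma_b$.} In the interior, where $|\xi|^2\neq 1$, I would use the elliptic parametrix for $h^2\Delta+1$: since the right-hand side is $o(1)$ in $L^2$, the measure vanishes there. Near the boundary, in local coordinates $h^2\Delta+1=h^2\d_{x_n}^2+R(x,hD_{x'})+1$, I would treat two regions.
\begin{itemize}
\item In the tangentially elliptic region $\{r(x,\xi')+1<0\}$, the one-dimensional ODE in $x_n$ with Dirichlet data gives decay, so $\nu=0$ there.
\item In the hyperbolic and glancing regions, the measure lives on $j(\{|\xi|^2=1\})$.
\end{itemize}
Together these give $\operatorname{supp}\nu\subset\Sigma_b$.

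\textbf{Propagation and the diffractive set.} For propagation I would use the commutator identity
\[
\frac{i}{h}\big([h^2\Delta,B_h]v_h,v_h\big)=\frac{i}{h}\Big(\big(B_h(h^2\Delta+1)v_h,v_h\big)-\big(B_hv_h,(h^2\Delta+1)v_h\big)\Big)+\text{boundary terms}.
\]
This is exactly where the hypothesis $\|(h^2\Delta+1)v_h\|=o(h)$ is needed: the first group of terms is then $o(1)$. In the interior this gives $H_p\nu=0$. At the boundary I would work with tangential $B_h$. The boundary terms involve $h\d_nv_h|_{\d\Omega}$, which define a boundary measure $\nu_\d$ through the trace bound above. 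Following Melrose--Sj\"ostrand and the Lebeau/Burq--Lebeau approach, I would then derive an identity of the form $H_p\nu=\text{(terms carried by }\nu_\d)$.
\begin{itemize}
\item At hyperbolic points, the incoming and outgoing contributions are related by the Dirichlet reflection law.
\item At diffractive points of $\mathcal{G}^{2,+}$, a sign argument shows that $\nu$ puts no mass there, using a test function whose Hamiltonian derivative is strictly signed transversally to the boundary.
\end{itemize}
From these identities, the support of $\nu$ being a union of generalized bicharacteristics follows by the standard measure-theoretic argument, as in Lebeau's propagation of measures.

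The main obstacle is the glancing region. There the symbol calculus degenerates, the compressed map $j$ is not smooth, and one must control the trace terms and the lack of smooth symplectic structure. I would first try to reproduce Burq's treatment via the Melrose--Sj\"ostrand normal form near gliding and diffractive points, and prove the non-charging of $\mathcal{G}^{2,+}$ by the sign argument before attempting the general propagation statement.
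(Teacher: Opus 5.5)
Your outline is a fair reconstruction of Burq's argument in \cite[Section~4]{BurqNontrapping}, which is what the paper invokes for the smooth part. The gap is the boundary regularity.

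After the trace estimate, every boundary ingredient you use is established only for a $C^\infty$ boundary. This covers three tools:
\begin{enumerate}
\item the tangential semiclassical calculus in coordinates where $h^2\Delta+1=h^2\partial_{x_n}^2+R(x,hD_{x'})+1$;
\item the Melrose--Sj\"ostrand normal form near glancing points;
\item Lebeau's passage from $H_p\nu=(\text{terms carried by }\nu_\partial)$ to ``$\operatorname{supp}\nu$ is a union of generalized bicharacteristics''.
\end{enumerate}
The statement only assumes $\partial\Omega\in C^3$. You use this hypothesis only in the Rellich multiplier step, the one place where it is harmless.

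For a $C^3$ boundary the boundary-normal chart is only $C^2$, so the coefficients of $R$ are only $C^1$. This has several consequences. The Hamiltonian vector field, and the gliding vector field on the glancing set, are merely continuous. The normal form is not available. Generalized bicharacteristics through a point need not be unique. The final step therefore does not follow ``by the standard argument''. With a continuous field there is no well-defined flow transporting the measure. One must instead show, by a separate compactness-type construction, that every point of $\operatorname{supp}\nu$ lies on a generalized bicharacteristic contained in $\operatorname{supp}\nu$. Likewise, your derivations of the boundary propagation identity and of $\nu(\mathcal{G}^{2,+})=0$ rely on the smooth symbolic calculus and would have to be redone in this setting.

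The paper does not do this part by hand. After citing Burq, it handles the $C^3$ boundary through \cite[Theorem 6.1]{BDLRI}, for the propagation equation of the measure at a boundary with a continuous vector field. It then uses \cite[Theorem 3.4]{BDLRII} for support propagation along generalized bicharacteristics. To complete your proof, you must either restrict to a smooth boundary, or replace your normal-form/glancing analysis and your final measure-theoretic step by these low-regularity results.
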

\begin{proof}
The setting of \cite[Section~4]{BurqNontrapping} applies directly here. In \cite[Section~4]{BurqNontrapping}, $\Theta \subset \R^d$ is a smooth compact and nontrapping obstacle whose complement is $\Omega= \R^d \setminus \Theta$. Homogeneous Dirichlet boundary conditions are assumed on $\d \Omega$, and the propagation of the $L^2$ defect measure is studied on $ \|(h^2\Delta+1)v_h\|_{L^2_{loc}(\Omega)}=o(h)$. Here, the bounded geometry of $\Omega$ simplifies slightly the presentation of \cite[Section~4]{BurqNontrapping}. The $C^3$ regularity of the boundary may be handled with \cite[Theorem 6.1]{BDLRI} and \cite[Theorem 3.4]{BDLRII}.
\end{proof}

\begin{lemma} \label{lem:measure-vanishes-on-omega}
Let $(v_h)$ and $\nu$ be as in
Theorem~\ref{thm:burq-boundary-propagation}. Assume that $a\in C(\overline\Omega)$ is nonnegative and
\[
\int_\Omega a(x)|h\nabla v_h|^2\,dx \rightarrow0.
\]
Then
\[
\nu\big|_{\pi^{-1}(\omega)}=0,
\qquad
\omega=\{x\in\Omega;\ a(x)>0\}.
\]
\end{lemma}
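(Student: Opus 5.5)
Since $\nu$ is a positive Radon measure on $\Sigma_b$, it suffices to show that $\int_{\Sigma_b}\varphi(x)\,d\nu=0$ for every $\varphi\in C_c^\infty(\omega)$ with $0\le\varphi\le 1$. Indeed, $\pi^{-1}(\omega)$ is exhausted by the sets $\pi^{-1}(\{\varphi=1\})$ as $\varphi$ ranges over such cutoffs. The strategy is to test the measure against the interior operator $h^2\,\mathrm{div}(\varphi\nabla\cdot)$, or equivalently against the symbol $\varphi(x)|\xi|^2$. On $\Sigma_b$ this symbol coincides with $\varphi(x)$, because $|\xi|^2=1$ there and $\operatorname{supp}\varphi\Subset\Omega$ stays away from the boundary, so no compressed-bundle subtleties arise.

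\textbf{Step 1.} We will show that
\[
\lim_{h\to0}\int_\Omega \varphi|h\nabla v_h|^2\,dx=\int_{\Sigma_b}\varphi(x)|\xi|^2\,d\nu=\int_{\Sigma_b}\varphi\,d\nu .
\]
Since $\varphi$ has compact support in the interior, the operator $A_h=-h^2\,\mathrm{div}(\varphi\nabla\cdot)$ is a purely interior semiclassical differential operator with principal symbol $\varphi(x)|\xi|^2$. It is not compactly supported in $\xi$, so Theorem~\ref{thm:burq-boundary-propagation} cannot be applied to it directly. To handle this, we will insert a frequency cutoff. Take $\tilde\varphi\in C_c^\infty(\omega)$ equal to $1$ near $\operatorname{supp}\varphi$, and $\chi\in C_c^\infty(\R)$ equal to $1$ near $1$. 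Then split
\[
\tilde\varphi v_h=\chi(-h^2\Delta)\tilde\varphi v_h+(1-\chi(-h^2\Delta))\tilde\varphi v_h ,
\]
where we may use the Fourier multiplier on $\R^d$ because $\tilde\varphi v_h$ is compactly supported. For the second piece we will use semiclassical elliptic regularity. Since $(h^2\Delta+1)(\tilde\varphi v_h)=\tilde\varphi(h^2\Delta+1)v_h+[h^2\Delta,\tilde\varphi]v_h$, this gives
\[
\|h\nabla (1-\chi)\tilde\varphi v_h\|\lesssim o(h)+\|[h^2\Delta,\tilde\varphi]v_h\|_{H^{-1}_{sc}}\cdot(\text{microsupport off }|\xi|=1).
\]
The commutator $[h^2\Delta,\tilde\varphi]=O(h)$ in $H^1_{sc}\to L^2$, and $\|h\nabla v_h\|$ is bounded, which one obtains by pairing the equation with $v_h$ and using the Dirichlet condition. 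Hence the non-characteristic part contributes $o(1)$ in $H^1_{sc}$. This is the main technical point, and the cleanest route is probably the standard symbolic parametrix argument on $\R^d$.

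\textbf{Step 2.} On the microlocalized part, $\varphi|\xi|^2\chi(|\xi|^2)^2$ is a compactly supported interior symbol, so Theorem~\ref{thm:burq-boundary-propagation} applies to the corresponding $A_h$. Combined with Step~1 and the fact that $\chi\equiv1$ on $\Sigma_b$, this yields the identity claimed in Step~1.

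\textbf{Step 3.} On $\operatorname{supp}\varphi$, which is compact in $\omega$, continuity and positivity of $a$ give $a\ge c_\varphi>0$. Therefore
\[
\int_\Omega\varphi|h\nabla v_h|^2\le c_\varphi^{-1}\int_\Omega a|h\nabla v_h|^2\to0 .
\]
It follows that $\int\varphi\,d\nu=0$, and by positivity of $\nu$ we conclude $\nu|_{\pi^{-1}(\omega)}=0$.

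The main obstacle is Step~1, namely justifying that the gradient energy is captured by $\nu$ despite the symbol not being compactly supported in $\xi$. We expect the elliptic estimate off the characteristic set to suffice, since it uses only $\|(h^2\Delta+1)v_h\|=o(h)$ and the boundedness of $v_h$ in $L^2$.
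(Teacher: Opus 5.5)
Your proposal is correct and follows the paper's argument. The lower bound $a\ge c_\varphi>0$ on $\operatorname{supp}\varphi\Subset\omega$ gives $\int_\Omega\varphi|h\nabla v_h|^2\,dx\to0$, and this limit is identified with $\int_{\Sigma_b}\varphi|\xi|^2\,d\nu=\int_{\Sigma_b}\varphi\,d\nu$ because $|\xi|=1$ on $\Sigma_b$. Your Steps~1--2 (frequency cutoff, elliptic estimate off $\{|\xi|=1\}$, and the $H^1_{sc}$ bound obtained by pairing the equation with $v_h$) make rigorous the step the paper compresses into ``passing to the limit and using the definition of the semiclassical measure''; that step does need justification, since the test symbol $\chi(x)^2|\xi|^2$ is not compactly supported in $\xi$.
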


\begin{proof}
Let $\chi\in C_c^\infty(\omega)$ be real-valued and nonnegative. Since $a$ is continuous and
strictly positive on the compact set $\operatorname{supp}\chi=\overline{ \{ x\in \omega \, | \, \chi(x)>0\} }$, there
exists $c_\chi>0$ such that
\[
a(x)\geq c_\chi
\quad\text{on }\operatorname{supp}\chi.
\]
Consequently,
\[
\|\chi h\nabla v_h\|_{L^2(\Omega)}^2
\leq
\frac{\|\chi\|_{L^\infty}^2}{c_\chi}
\int_\Omega a(x)|h\nabla v_h|^2\,dx
\rightarrow 0.
\]
Passing to the limit and using the definition of the semiclassical measure,
\[
\int_{\Sigma_b}\chi(x)^2|\xi|^2\,d\nu(x,\xi)=0.
\]
But since $|\xi|^2=1$ on $\Sigma_b$, it follows that
\[
\int_{\Sigma_b}\chi(x)^2\,d\nu(x,\xi)=0.
\]
As $\chi\in C_c^\infty(\omega)$ is arbitrary, $\nu$ vanishes above
$\omega$.
\end{proof}\newline

We now turn to the proof of Lemma \ref{lem:hfobs}.\newline 

\begin{proof}
Suppose that \eqref{obs:hf} does not hold and denote $h=1/|\mu|$. Then, there exist $h\rightarrow0$ and $(w_h,g_h,J_h)$ satisfying
\begin{equation}
\label{eq:hf-normalization}
\|\nabla w_h\|_{L^2}^2+h^{-2}\|w_h\|_{L^2}^2=1,
\end{equation}
\begin{equation}
\label{eq:hf-equation}
(\Delta+h^{-2})w_h=g_h-\textrm{div}J_h
\quad\textrm{in }H^{-1}(\Omega),
\end{equation}
and
\begin{equation}
\label{eq:hf-small-rhs}
\int_\Omega a(x)|\nabla w_h|^2dx
+\|g_h\|_{L^2}^2+h^{-2}\|J_h\|_{L^2(\Omega;\CC^d)}^2
\rightarrow0.
\end{equation}
Define
\[
y_h=h^{-1}w_h.
\]
Then \eqref{eq:hf-normalization} becomes for the semiclassical $H^1$-norm,
\begin{equation}
\label{eq:yh-normalization}
\|y_h\|_{H_{sc}^1}^2
:=\|y_h\|_{L^2}^2+\|h\nabla y_h\|_{L^2}^2=1.
\end{equation}
Multiplying \eqref{eq:hf-equation} by $h$ gives
\begin{equation}
\label{eq:semiclassical-equation}
P_hy_h:=\left(h^2\Delta +1\right)y_h
=h g_h-h\textrm{div} J_h=:F_h
\quad\textrm{in }H_h^{-1}.
\end{equation}
For every $\phi\in H_0^1(\Omega)$,
\begin{align*}
\left|\left\langle h\textrm{div}J_h,\phi\right\rangle\right|
&=h\left|\int_\Omega J_h\cdot\nabla \overline\phi\,dx\right|\\
&\leq\|J_h\|_{L^2(\Omega;\CC^d)}\|h\nabla \phi\|_{L^2}
\leq\|J_h\|_{L^2(\Omega;\CC^d)}\|\phi\|_{H_{sc}^1}.
\end{align*}
Consequently, \eqref{eq:hf-small-rhs} implies
\begin{equation}
\label{eq:Fh-small}
\|F_h\|_{H_{sc}^{-1}}
\leq h\|g_h\|_{L^2}+\|J_h\|_{L^2(\Omega;\CC^d)}
=o(h).
\end{equation}
Moreover,
\begin{equation}
\label{eq:hf-observation}
\int_\Omega a(x)|h\nabla y_h|^2dx
=\int_\Omega a(x)|\nabla w_h|^2dx \rightarrow0.
\end{equation}

Let $0<\delta<1/4$ and choose $\chi\in C_c^\infty((0,\infty))$ such that
\[
\chi(s)=1\quad\textrm{if }|s-1|\leq\delta,
\qquad
\textrm{supp}\,\chi\subset\{s\in(0,\infty):|s-1|<2\delta\}.
\]
Using the functional calculus for the (positive) Dirichlet Laplacian $-\Delta$, set
\[
v_h=\chi(-h^2\Delta)y_h.
\]
Since $\chi(-h^2\Delta)$ and $P_h$ commute, we have
\begin{equation}
\label{eq:vh-equation}
P_hv_h=\chi(-h^2\Delta)F_h.
\end{equation}
Moreover, since the compact support of $\chi$ localizes functions in frequency, we deduce that $\chi(-h^2\Delta)F_h$ belongs to $D( (-h^2\Delta)^k)$ for $k\in \N$. The embedding and \eqref{eq:Fh-small} therefore yield, 
\begin{equation}
\label{eq:localized-rhs}
\|P_hv_h\|_{L^2}
\leq C\|F_h\|_{H_{sc}^{-1}}=o(h).
\end{equation}
On the other hand, the operator $(h^2\Delta +1)$ is elliptic for $(1-\chi(-h^2\Delta))y_h$. Thus,  
\begin{equation}
\label{eq:elliptic-complement}
\|(1-\chi(-h^2\Delta ))y_h\|_{H_{sc}^1}
\leq C\|F_h\|_{H_{sc}^{-1}}=o(h).
\end{equation} 
Now, since $\|v_h\|_{L^2}$ is bounded, we can attach an $L^2$ semiclassical measure to it thanks to Theorem \ref{thm:burq-boundary-propagation}. Moreover, from \eqref{eq:hf-observation} and \eqref{eq:elliptic-complement}, we have, 
\[
\int_\Omega a(x)|h\nabla v_h|^2dx \rightarrow 0.
\]
Lemma \ref{lem:measure-vanishes-on-omega} therefore shows that $\nu$ vanishes above the observation region $\omega=\{a>0\}$. The geometric control condition and Theorem \ref{thm:burq-boundary-propagation} then imply $\nu=0$, that is, 
\[
\|v_h\|_{L^2(\Omega)}\rightarrow0.
\]
But since the support of $\chi$ is included in $\{ s\in (0,\infty) \, | \, |s-1|< 2\delta \}$, we conclude there exist uniform bounds 
\[
c \|v_h \|_{L^2(\Omega)} \leq \|h \nabla  v_h \|_{L^2(\Omega)}  \leq C\|v_h \|_{L^2(\Omega)}  \rightarrow 0. 
\]
which implies $\|v_h\|_{H^1_{sc}} \rightarrow 0$ as $h\rightarrow \infty$. 
Together with \eqref{eq:elliptic-complement}, this yields
\[
\|y_h\|_{H_{sc}^1}\longrightarrow0,
\]
which contradicts \eqref{eq:yh-normalization}. This proves Lemma \ref{lem:hfobs}.
\end{proof}

\section{Technical Lemmas}\label{sec:lemmatech}

This section is devoted to the proof of Lemma \ref{convaunabla}. First, denote by $g(s)$ the extension of $g$ by zero for $s<0$ and set
\[
\hat{g}(\tau) = \int_{-\infty}^{\infty} g(s)e^{-i \tau s} ds, \quad \tau \in \R.
\]
\subsection{Preliminary technical lemma}
The proof of Lemma \ref{convaunabla} relies on the following technical lemma. 
\begin{lemma}\label{lemtech}
Assuming $\mu_n \neq 0 $, we have, 
\begin{equation}\label{estimghat}
|\hat{g}(\mu_n) | \leq \dfrac{2g(0)}{|\mu_n|},
\end{equation}
\begin{equation}\label{estimKhn}
\|\mathcal{K}_{\mu_n}h_n\|_{L^2(H^1_a)}
\leq 2C\|h_n\|_{L^2(H^1_a)}.
\end{equation}
\begin{equation}\label{estimhn}
\left\| \int_0^\infty g(s) 
\int_0^s e^{i\mu_n (\sigma-s) } a(x) \nabla h_n(x,\sigma) d\sigma ds \right\|_{L^2(\Omega;\CC^d)} \leq \dfrac{C}{|\mu_n|} \|h_n\|_{L^2(H^1_a)},
\end{equation}
for $C>0$ independent of $n\in \N$.
\end{lemma}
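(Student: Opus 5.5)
For \eqref{estimghat}, I will integrate by parts once in $s$. Since $g\in W^{1,1}(0,\infty)$ and $g$ is non-increasing and positive, $g(s)\to 0$ as $s\to\infty$. Hence
\[
\hat g(\mu_n)=\int_0^\infty g(s)e^{-i\mu_n s}\,ds=\frac{g(0)}{i\mu_n}+\frac{1}{i\mu_n}\int_0^\infty g'(s)e^{-i\mu_n s}\,ds .
\]
The last integral is bounded in modulus by $\int_0^\infty |g'(s)|\,ds=-\int_0^\infty g'(s)\,ds=g(0)$, by monotonicity. This gives $|\hat g(\mu_n)|\le 2g(0)/|\mu_n|$.

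For \eqref{estimKhn}, the operator $\mathcal K_{\mu_n}$ acts only in $s$, so it suffices to work with the $L^2(\Omega;\CC^d)$-valued function $H(s)=\sqrt a\,\nabla h_n(\cdot,s)$ and the weight $g(s)$. I will use Cauchy--Schwarz with a weight adapted to $g$. The hypothesis $g\le -cg'$ gives $(\log g)'\le -1/c$, hence $g(s)\le g(\sigma)e^{-(s-\sigma)/c}$ for $\sigma\le s$. Then
\[
\|\mathcal K_{\mu_n}H(s)\|\le\int_0^s\|H(\sigma)\|\,d\sigma ,
\]
and
\[
g(s)\Big(\int_0^s\|H\|\Big)^2\le \Big(\int_0^s e^{-(s-\sigma)/(2c)}\,d\sigma\Big)\int_0^s g(\sigma)e^{-(s-\sigma)/(2c)}\|H(\sigma)\|^2\,d\sigma .
\]
Here I used $g(s)\le g(\sigma)e^{-(s-\sigma)/c}$ and split the exponential symmetrically. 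The first factor is at most $2c$. Integrating in $s$ and applying Fubini, the factor $\int_\sigma^\infty e^{-(s-\sigma)/(2c)}\,ds=2c$ appears, so $\|\mathcal K_{\mu_n}h_n\|^2_{L^2(H^1_a)}\le 4c^2\|h_n\|^2_{L^2(H^1_a)}$. This yields \eqref{estimKhn} with $C=c$. The estimate is independent of $\mu_n$, since only the modulus of the oscillating factor is used.

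For \eqref{estimhn}, which is the main point, I need the gain $1/|\mu_n|$, and it must come from the oscillation. First I apply Fubini to exchange the $s$ and $\sigma$ integrals:
\[
\int_0^\infty g(s)\int_0^s e^{i\mu_n(\sigma-s)}a\nabla h_n(\sigma)\,d\sigma\, ds=\int_0^\infty a\nabla h_n(\sigma)\,G_n(\sigma)\,d\sigma,
\qquad
G_n(\sigma):=\int_\sigma^\infty g(s)e^{-i\mu_n(s-\sigma)}\,ds .
\]
The same integration by parts as in \eqref{estimghat}, applied on $(\sigma,\infty)$, gives $|G_n(\sigma)|\le 2g(\sigma)/|\mu_n|$, again using monotonicity, since $\int_\sigma^\infty|g'|=g(\sigma)$. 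I then bound the $L^2(\Omega)$ norm by Minkowski and Cauchy--Schwarz in $\sigma$ with the weight $g$, noting $a\le\|a\|_\infty^{1/2}\sqrt a$:
\[
\Big\|\int_0^\infty a\nabla h_n\,G_n\,d\sigma\Big\|\le \frac{2\|a\|_\infty^{1/2}}{|\mu_n|}\int_0^\infty g(\sigma)\|\sqrt a\nabla h_n(\sigma)\|\,d\sigma\le \frac{2\|a\|_\infty^{1/2}g_0^{1/2}}{|\mu_n|}\|h_n\|_{L^2(H^1_a)} .
\]

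The step most likely to cause trouble is justifying Fubini and the integration by parts at infinity. Since $h_n$ is only an element of the weighted Bochner space, I will first prove the bounds for $h_n$ smooth and compactly supported in $s$, then conclude by density, because both sides are continuous in the $L^2(H^1_a)$ norm by the estimates themselves.
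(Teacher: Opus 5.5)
Your proof is correct. For \eqref{estimghat} and \eqref{estimhn} it coincides with the paper's. When the paper writes ``using \eqref{estimghat}'' in the last part, it really means your bound $|G_n(\sigma)|\le 2g(\sigma)/|\mu_n|$, obtained by the same integration by parts on $(\sigma,\infty)$.

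For \eqref{estimKhn} you take a genuinely different route. The paper uses an energy argument:
\begin{itemize}
\item it differentiates via the transport ODE $\partial_s\mathcal K_{\mu_n}h_n=h_n-i\mu_n\mathcal K_{\mu_n}h_n$ and multiplies by $g(s)\,a\,\nabla\overline{\mathcal K_{\mu_n}h_n}$;
\item it integrates by parts on $(0,S)$ using $g\le -cg'$; the $i\mu_n$ term vanishes after taking real parts, and the boundary term at $S$ has a good sign;
\item it closes with Cauchy--Schwarz and lets $S\to\infty$.
\end{itemize}
You instead integrate $g\le -cg'$ into the comparison $g(s)\le g(\sigma)e^{-(s-\sigma)/c}$. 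You then run a Schur-type weighted Cauchy--Schwarz on the Volterra kernel, discarding the oscillation entirely.

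Both give the same constant $2c$. The paper's version mirrors the dissipativity computation and stays inside the $s$-transport structure of the problem. However, it requires justifying an integration by parts in $s$ for an $H^1_a$-valued function. Yours is a Tonelli argument on nonnegative quantities, so it needs no differentiation in $s$. It also shows the bound already holds for the majorant $h\mapsto\int_0^s\|\sqrt a\nabla h(\sigma)\|\,d\sigma$.

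Your closing density step is unnecessary. Part 2 is pure Tonelli. In part 3 the Fubini exchange is justified directly, because $\int_\sigma^\infty g(s)\,ds\le c\,g(\sigma)$ (again from $g\le -cg'$), which makes the double integral absolutely convergent via the same weighted Cauchy--Schwarz.
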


\begin{proof}
Since $g$ is non-increasing, positive and integrable, we have
\[
\lim_{s\rightarrow\infty}g(s)=0,
\qquad
\int_0^\infty |g'(s)|ds=g(0).
\]
An integration by parts gives, for $\mu_n\neq0$,
\[
\hat g(\mu_n)
=\frac{1}{i\mu_n}\left(g(0)+\int_0^\infty g'(s)e^{-i\mu_ns}ds\right).
\]
The bound \eqref{estimghat} follows directly,
\[
|\hat g(\mu_n)|
\leq\frac{g(0)+\|g'\|_{L^1(\R^+)}}{|\mu_n|}
=\frac{2g(0)}{|\mu_n|}.
\]

We next estimate $\mathcal K_{\mu_n}h_n$,
\[
(\mathcal{K}_{\mu_n}h_n)(s) := \int_0^s e^{i\mu_n (\sigma-s) }h_n(x,\sigma) d\sigma.
\]
From its definition,
\[
\partial_s(\mathcal K_{\mu_n}h_n)(s)
=h_n(.,s)-i\mu_n(\mathcal K_{\mu_n}h_n)(s),
\qquad
(\mathcal K_{\mu_n}h_n)(0)=0.
\]
For $S>0$, integration by parts and $g(s)\leq-cg'(s)$ give
\begin{align*}
\int_0^S\int_\Omega g(s)a(x)
|\nabla \mathcal K_{\mu_n}h_n|^2dxds
&\leq-c\int_0^S\int_\Omega g'(s)a(x)
|\nabla \mathcal K_{\mu_n}h_n|^2dxds \\
& \leq 2c \textrm{Re}\int_0^S\int_\Omega g(s)a(x)
\nabla h_n\cdot\nabla \overline{\mathcal K_{\mu_n}h_n}
dx ds \\ 
& \quad +2c \textrm{Re} i\mu_n \int_0^S\int_\Omega  g(s)a(x)  |\nabla \mathcal K_{\mu_n}h_n |^2
dx ds  \\ 
& \quad - c \int_\Omega g(S)a(x) | \nabla \mathcal K_{\mu_n}h_n (S) |^2
dx \\
&\leq 2c\,\textrm{Re}\int_0^S\int_\Omega g(s)a(x)
\nabla h_n\cdot\nabla \overline{\mathcal K_{\mu_n}h_n}
dx ds
\end{align*}
since the boundary term is nonpositive and the real part of one of the integrant is zero. We use the Cauchy-Schwarz inequality on the right-hand side of the inequality to recover,
\[
\left(\int_0^S\int_\Omega g(s)a(x)
|\nabla \mathcal K_{\mu_n}h_n|^2dxds\right)^{1/2}
\leq 2c
\left(\int_0^S\int_\Omega g(s)a(x)
|\nabla h_n|^2dxds\right)^{1/2}.
\]
Letting $S\rightarrow\infty$, we obtain
\[
\|\mathcal K_{\mu_n}h_n\|_{L^2(H^1_a)}
\leq2c\|h_n\|_{L^2(H^1_a)}.
\]

It remains to prove \eqref{estimhn}. Fubini's theorem gives
\begin{align*}
&\int_0^\infty g(s)\int_0^s
e^{i\mu_n(\sigma-s)}a(x)\nabla h_n(x,\sigma)d\sigma ds\\
&\qquad=\int_0^\infty
\left(\int_\sigma^\infty g(s)e^{i\mu_n(\sigma-s)}ds\right)
a(x)\nabla h_n(x,\sigma)d\sigma.
\end{align*}
Thus, using \eqref{estimghat} and Cauchy-Schwarz, 
\begin{align*}
\Bigg\|\int_0^\infty g(s) \int_0^s &
e^{i\mu_n(\sigma-s)}a(x) \nabla h_n(x,\sigma)d\sigma ds
\Bigg\|_{L^2(\Omega;\CC^d)}\\
&\quad\leq\frac{2}{|\mu_n|}\int_0^\infty g(\sigma)
\|a\nabla h_n(\sigma)\|_{L^2(\Omega;\CC^d)}d\sigma\\
&\quad\leq\frac{2\|a\|_{L^\infty(\Omega)}^{1/2}}{|\mu_n|}
\int_0^\infty g(\sigma)
\|\sqrt a\nabla h_n(\sigma)\|_{L^2(\Omega;\CC^d)}d\sigma\\
&\quad\leq\frac{2g_0^{1/2}\|a\|_{L^\infty(\Omega)}^{1/2}}{|\mu_n|}
\|h_n\|_{L^2(H^1_a)}.
\end{align*}
This proves \eqref{estimhn} and therefore concludes the proof of Lemma \ref{lemtech}.
\end{proof}
\subsection{Proof of Lemma \ref{convaunabla}}

We now turn to the proof of Lemma \ref{convaunabla}.\newline

\begin{proof}
From \eqref{relwn}, write, 
\[
(1-e^{-i\mu_n s} )w_n = \eta_n + (\mathcal{K}_{\mu_n} h_n)(s) 
\]
Thus, we have, 
\[
\Big( (1-e^{-i\mu_n s} )w_n, (1-e^{i\mu_n s} )w_n \Big)_{L^2(H^1_a)}=\Big(  \eta_n + (\mathcal{K}_{\mu_n} h_n)(s)  , (1-e^{i\mu_n s} )w_n \Big)_{L^2(H^1_a)}
\]
On one hand, we have
\begin{align}
\Big( (1-e^{-i\mu_n s} )w_n, (1-e^{i\mu_n s} )w_n \Big)_{L^2(H^1_a)} & = \int_0^\infty g(s) |1-e^{-i\mu_n s}|^2 \int_\Omega a(x) | \nabla w_n(x) |^2 dx ds \nonumber \\
& \geq C_{\mu_n}   \int_\Omega a(x) | \nabla w_n(x) |^2 dx  \nonumber \\
& = C_{\mu_n}\|\sqrt{a}\nabla w_n \|_{L^2(\Omega)}^2 \label{estimwwnorm}
\end{align}
where $C_{\mu_n}>0$ is defined by 
\[
C_{\mu_n} :=  \int_0^\infty g(s) |1-e^{-i\mu_n s}|^2 ds= 4 \int_0^\infty g(s) \sin^2\left( \dfrac{\mu_n s }{2} \right)  ds. 
\]
We use here the assumption
$\mu_n \rightarrow \mu^* \in (\R \cup \{\pm \infty \} )\setminus \{0\} $ to obtain a positive lower bound on $C_{\mu_n}$ for $n$ large. First, 
\[
C_{\mu_n}=4 \int_0^\infty g(s) \sin^2\left( \dfrac{\mu_n s }{2} \right)  ds = 2 \int_0^\infty g(s) (1- \cos\left( \mu_n s  \right))  ds \rightarrow 2 \int_0^\infty g(s) ds = 2g_0
\]
if $|\mu_n | \rightarrow \infty$ by Riemann-Lebesgue. Thus, assume $\mu_n \rightarrow \mu^* $ with $\mu^*\neq 0$. In this case, for $n$ sufficiently large, we have $|\mu_n|>|\mu^*|/2$. Thus, for $L=4/|\mu^*|>2/|\mu_n|$, we have, 
\begin{align}
C_{\mu_n} = 4 \int_0^\infty g(s) \sin^2\left( \dfrac{\mu_n s }{2} \right)  ds \nonumber 
 & \geq 4 g(L) \int_0^{L}  \sin^2\left( \dfrac{\mu_n s }{2} \right)  ds  \nonumber \\
  & \geq  2g(L)  \left( L -  \frac{\sin ( \mu_n L  )}{\mu_n}  \right) \nonumber \\
& \geq  2g(L)  \left( L -  \frac{1}{|\mu_n|}  \right) \nonumber \\
& \geq  2g(L)  \left( L -  \frac{2}{|\mu^*|}  \right)  \nonumber \\
& \geq   g\left(\dfrac{4}{|\mu^*|}\right)  \left(  \frac{4}{|\mu^*|}  \right). \nonumber
\end{align}
Denote $C^*>0$ the uniform bound $C_{\mu_n}\geq C^*$ for $n$ large and $\mu_n \rightarrow \mu^* \in (\R \cup \{\pm \infty \} )\setminus \{0\} $.\newline

On the other hand, 
\[
\Big|\Big(  \eta_n + (\mathcal{K}_{\mu_n} h_n)(s)  , (1-e^{i\mu_n s} )w_n \Big)_{L^2(H^1_a)}\Big| \leq \Big\| \eta_n + (\mathcal{K}_{\mu_n} h_n)(s)  \Big\|_{L^2(H^1_a)} \Big\|(1-e^{i\mu_n s} )w_n \Big\|_{L^2(H^1_a)}
\]
But, 
\begin{align*}
\|(1-e^{i\mu_n s} )w_n \|_{L^2(H^1_a)}^2 &= \int_0^\infty g(s) \big| 1-e^{i\mu_n s} \big|^2 \int_\Omega a(x) | \nabla w_n(x) |^2 dx ds\\
 & \leq 4 g_0 \|\sqrt{a}\nabla w_n \|_{L^2(\Omega;\CC^d)}^2. 
\end{align*}
Combining this computation with \eqref{estimwwnorm}, we therefore conclude 
\[
\|\sqrt{a}\nabla w_n \|_{L^2(\Omega;\CC^d)} \leq \dfrac{2\sqrt{g_0}}{C_{\mu_n}} \big(\|\eta_n \|_{L^2(H^1_a)} +\| \mathcal{K}_{\mu_n} h_n  \|_{L^2(H^1_a)}\big).
\]
In turn, we deduce for $n$ large and $\mu^* \in (\R \cup \{\pm \infty \} )\setminus \{0\} $
\[
\|\sqrt{a}\nabla w_n \|_{L^2(\Omega;\CC^d)} \leq \dfrac{2 \sqrt{g_0}}{C^*} \big(\|\eta_n \|_{L^2(H^1_a)} +\| \mathcal{K}_{\mu_n} h_n  \|_{L^2(H^1_a)}\big),
\]
for $C^*>0$ independent of $n$. From the observation of $\|\eta_n\|_{L^2(H^1_a)}\rightarrow 0$ given by \eqref{convetaforte}, and \eqref{estimKhn}, 
\begin{equation}\label{wasqrt}
\lim_{n\rightarrow \infty} \|\sqrt{a}\nabla w_n \|_{L^2(\Omega;\CC^d)} = 0.
\end{equation}
 We now turn to \eqref{Jnconv}. By definition, 
\[
J_n = H_n - \dfrac{\nabla f_n }{i\mu_n},
\]
and from the convergence of $ f_n$ in $H^1_0(\Omega)$, we reduce the analysis to 
\[
\lim_{n\rightarrow \infty} |\mu_n| \| H_n\|_{L^2(\Omega;\CC^d)}=0. 
\]
By definition,
\[
H_n(x)= \int_0^\infty g(s)a(x) \nabla (\eta_n -u_n ) ds. 
\]
Using \eqref{relwn}, we have, 
\begin{align}
H_n(x)&=-\int_0^\infty g(s) a(x) \nabla \left(e^{-i\mu_n s}u_n(x) - \left(\dfrac{1-e^{-i\mu_n s}}{i\mu_n}\right)f_n(x) \right)  ds \nonumber \\
& \quad - \int_0^\infty g(s) 
\int_0^s e^{i\mu_n (\sigma-s) } a(x) \nabla h_n(x,\sigma) d\sigma ds \nonumber  \\
& =-\int_0^\infty g(s)e^{-i\mu_n s} a(x) \nabla w_n(x)  ds \nonumber  \\
& \quad +\int_0^\infty g(s) a(x)  \dfrac{ \nabla f_n(x)}{i\mu_n} ds \nonumber  \\
& \quad - \int_0^\infty g(s) 
\int_0^s e^{i\mu_n (\sigma-s) } a(x) \nabla h_n(x,\sigma) d\sigma ds \nonumber  \\
& =- \hat{g}(\mu_n )a(x) \nabla w_n(x) +g_0 a(x) \dfrac{\nabla  f_n(x)}{i\mu_n} \nonumber  \\
& \quad - \int_0^\infty g(s) 
\int_0^s e^{i\mu_n (\sigma-s) } a(x) \nabla h_n(x,\sigma) d\sigma ds
\label{eq:Hn-decomposition}
\end{align}
Multiplying \eqref{eq:Hn-decomposition} by $|\mu_n|$ and using the triangle inequality, we obtain
\begin{align}
 |\mu_n|\lVert H_n\rVert_{L^2(\Omega;\mathbb C^d)}
 &\leq
 |\mu_n \hat g(\mu_n)| \, 
 \lVert a\nabla w_n\rVert_{L^2(\Omega;\mathbb C^d)} \nonumber 
 +g_0\lVert a\nabla  f_n\rVert_{L^2(\Omega;\mathbb C^d)} \nonumber \\
 &\quad
 +|\mu_n|\left\lVert
 \int_0^\infty g(s)\int_0^s
 e^{i\mu_n(\sigma-s)}a(x)\nabla h_n(x,\sigma)
 \,d\sigma\,ds
 \right\rVert_{L^2(\Omega;\mathbb C^d)}
 \label{eq:Hn-bound}
\end{align}
Therefore,
\begin{align*}
 \lVert a\nabla w_n\rVert_{L^2(\Omega;\mathbb C^d)}^2
 =\int_\Omega a(x)^2|\nabla w_n(x)|^2\,dx \leq
 \lVert a\rVert_{L^\infty(\Omega)}
 \lVert\sqrt a\nabla w_n\rVert_{L^2(\Omega;\mathbb C^d)}^2.
\end{align*}
Consequently, estimating $\hat g(\mu_n)$ with Lemma \ref{lemtech} and \eqref{wasqrt}
give
\begin{align*}
 |\mu_n \hat g(\mu_n)| \,
 \lVert a\nabla w_n\rVert_{L^2(\Omega;\mathbb C^d)}
 &\leq
 2g(0)\lVert a\rVert_{L^\infty(\Omega)}^{1/2}
 \lVert\sqrt a\nabla w_n\rVert_{L^2(\Omega;\mathbb C^d)} \rightarrow0.
\end{align*}

For the second term in \eqref{eq:Hn-bound},
\[
 g_0\lVert a\nabla f_n\rVert_{L^2(\Omega;\mathbb C^d)}
 \leq
 g_0\lVert a\rVert_{L^\infty(\Omega)}
 \lVert\nabla f_n\rVert_{L^2(\Omega;\mathbb C^d)}
 \rightarrow0,
\]
since $f_n\rightarrow 0$ in $H_0^1(\Omega)$. Finally, Lemma \ref{lemtech} yields
\begin{align*}
 &|\mu_n|\left\lVert
 \int_0^\infty g(s)\int_0^s
 e^{i\mu_n(\sigma-s)}a(x)\nabla h_n(x,\sigma)
 \,d\sigma\,ds
 \right\rVert_{L^2(\Omega;\mathbb C^d)} \leq
 C\lVert h_n\rVert_{L^2(H^1_a)}
 \rightarrow0,
\end{align*}
since $h_n\to0$ in $L^2(H^1_a)$.  We conclude from
\eqref{eq:Hn-bound},  
\[
 \lim_{n\to\infty}
 |\mu_n|\lVert H_n\rVert_{L^2(\Omega;\mathbb C^d)}=0.
\]
Thus, we have proved, 
\begin{align*}
 |\mu_n|\lVert J_n\rVert_{L^2(\Omega;\mathbb C^d)}
 &\leq
 |\mu_n|\lVert H_n\rVert_{L^2(\Omega;\mathbb C^d)}
 +\lVert\nabla f_n\rVert_{L^2(\Omega;\mathbb C^d)}
 \longrightarrow0.
\end{align*}
This proves \eqref{Jnconv}.
\end{proof}

\bibliographystyle{plain}
\bibliography{biblio}

\begin{thebibliography}{10}

\bibitem{AugnerJacob2014}
B.~Augner and B.~Jacob.
\newblock Stability and stabilization of infinite-dimensional linear
  port-{Hamiltonian} systems.
\newblock {\em Evolution Equations and Control Theory}, 3(2):207--229, 2014.

\bibitem{BLR}
C.~Bardos, G.~Lebeau, and J.~Rauch.
\newblock Sharp sufficient conditions for the observation, control, and
  stabilization of waves from the boundary.
\newblock {\em SIAM J. Control Optim.}, 30(5):1024--1065, 1992.

\bibitem{BurqNontrapping}
N.~Burq.
\newblock Semi-classical estimates for the resolvent in nontrapping geometries.
\newblock {\em Int. Math. Res. Not.}, 2002(5):221--241, 2002.

\bibitem{BDLRI}
N.~Burq, B.~Dehman, and J.~Le~Rousseau.
\newblock Measure and continuous vector field at a boundary {I}: propagation
  equations and wave observability, 2024.
\newblock arXiv:2407.02255.

\bibitem{BDLRII}
N.~Burq, B.~Dehman, and J.~Le~Rousseau.
\newblock Measure and continuous vector field at a boundary {II}: geodesics and
  support propagation, 2024.
\newblock arXiv:2407.02259.

\bibitem{CoursBG}
N.~Burq and P.~G{\'e}rard.
\newblock Contrôle optimal des équations aux dérivées partielles.
\newblock 2001.

\bibitem{Cavalcanti2}
M.~M. Cavalcanti, V.~N. Domingos~Cavalcanti, M.~A. Jorge~Silva, and A.~Y.
  de~Souza~Franco.
\newblock Exponential stability for the wave model with localized memory in a
  past history framework.
\newblock {\em JDE}, 264(11):6535--6584, 2018.

\bibitem{ContiDellOroPata2013}
M.~Conti, F.~Dell'Oro, and V.~Pata.
\newblock {Timoshenko} systems with fading memory.
\newblock {\em Dynamics of Partial Differential Equations}, 10(4):367--377,
  2013.

\bibitem{Dafermos}
C.~M. Dafermos.
\newblock Asymptotic stability in viscoelasticity.
\newblock {\em Arch. Ration. Mech. Anal.}, 37:297--308, 1970.

\bibitem{EN}
K.-J. Engel and R.~Nagel.
\newblock {\em One-parameter semigroups for linear evolution equations}, volume
  194 of {\em Grad. Texts Math.}
\newblock Berlin: Springer, 2000.

\bibitem{Jorge}
J.~C.~O. Faria, M.~A. Jorge~Silva, and A.~Y. Souza~Franco.
\newblock A general stability result for the semilinear viscoelastic wave model
  under localized effects.
\newblock {\em Nonlinear Anal., Real World Appl.}, 56:34, 2020.

\bibitem{FariaOkawaSouzaFranco2026}
J.~C.~O. Faria, C.~A. Okawa, and A.~Y. Souza~Franco.
\newblock Asymptotic behavior of a semilinear wave equation with both localized
  memory and delay effects.
\newblock {\em Asymptotic Analysis}, pages 1--27, 2026.

\bibitem{FariaSouzaFranco2023}
J.~C.~O. Faria and A.~Y. Souza~Franco.
\newblock Well-posedness and exponential stability for a {Klein--Gordon} system
  with locally distributed viscoelastic dampings in a past-history framework.
\newblock {\em Journal of Differential Equations}, 346:108--144, 2023.

\bibitem{timedependentwave}
J.~Le~Rousseau, G.~Lebeau, P.~Terpolilli, and E.~Tr{\'e}lat.
\newblock Geometric control condition for the wave equation with a
  time-dependent observation domain.
\newblock {\em Anal. PDE}, 10(4):983--1015, 2017.

\bibitem{LiuRao2004}
K.~Liu and B.~Rao.
\newblock Stabilit{\'e} exponentielle des {\'e}quations des ondes avec
  amortissement local de {Kelvin--Voigt}.
\newblock {\em Comptes Rendus. Math{\'e}matique}, 339(11):769--774, 2004.

\bibitem{LiuZheng1999}
Z.~Liu and S.~Zheng.
\newblock {\em Semigroups Associated with Dissipative Systems}, volume 398 of
  {\em Chapman \& Hall/CRC Research Notes in Mathematics Series}.
\newblock CRC Press, Boca Raton, FL, 1999.

\bibitem{MelroseSjostrand1978}
R.~B. Melrose and J.~Sj{\"o}strand.
\newblock Singularities of boundary value problems. {I}.
\newblock {\em Communications on Pure and Applied Mathematics}, 31(5):593--617,
  1978.

\bibitem{MelroseSjostrand1982}
R.~B. Melrose and J.~Sj{\"o}strand.
\newblock Singularities of boundary value problems. {II}.
\newblock {\em Communications on Pure and Applied Mathematics}, 35(2):129--168,
  1982.

\bibitem{Piupiu}
Q.~Peng and Y.~Liu.
\newblock Exponential stability for an infinite memory wave equation with
  frictional damping and logarithmic nonlinear terms.
\newblock {\em Nonlinear Anal., Real World Appl.}, 88:16, 2026.

\bibitem{Pruss1984}
J.~Pr{\"u}ss.
\newblock On the spectrum of {$C_0$}-semigroups.
\newblock {\em Transactions of the American Mathematical Society},
  284(2):847--857, 1984.

\bibitem{Wyler1994}
A.~Wyler.
\newblock Stability of wave equations with dissipative boundary conditions in a
  bounded domain.
\newblock {\em Asymptotic Analysis}, 8(4):379--407, 1994.

\bibitem{ZworskiBook}
M.~Zworski.
\newblock {\em Semiclassical analysis}, volume 138 of {\em Grad. Stud. Math.}
\newblock Providence, RI: American Mathematical Society (AMS), 2012.

\end{thebibliography}

\end{document}